\definecolor{darkgreen}{rgb}{0,0.4,0.1}
\def\a{\alpha}
\def\b{\beta}
\def\d{\delta}
\def\s{\sigma}
\def\leq{\leqslant}
\def\geq{\geqslant}
\numberwithin{equation}{section}
 \newtheorem{nono-thm}{Theorem}
 \newtheorem{thm}[equation]{Theorem}
\newtheorem{lemma}[equation]{Lemma}
\newtheorem{prop}[equation]{Proposition}
\newtheorem{rem}[equation]{Remark}
\newtheorem{note}[equation]{Note}
\def\wt{\widetilde}
\def\Z{{\mathbb Z}}
\def\R{{\mathbb R}}
\def\C{{\mathbb C}}
\def\N{{\mathbb N}}
\def\Jtr{{\rm tr}_d}
\def\YH{{\rm Y}_{d,n}}
\def\Jtrs{{\rm tr}_{d,D}}
\def\zet{z}
\begin{document}

\title[On the link invariants from the Yokonuma--Hecke algebras]
  {On the link invariants from the Yokonuma--Hecke algebras}

\author{S. Chmutov}
\address{Ohio State University,
1680 University Drive, Mansfield, OH 44906. \\
USA.}
\email{chmutov@math.ohio-state.edu}
\urladdr{https://people.math.osu.edu/chmutov.1}

\author{S. Jablan$^\dagger$}
\address{$^\dagger$(1952 - 2015) The Mathematical Institute, Knez Mihailova 36, B.O. Box 367, 11001 Belgrade, Serbia.}

\author{K. Karvounis}
\address{Institut f\"{u}r Mathematik,
Universit\"{a}t Z\"{urich},
Winterthurerstrasse 190, CH-8057 Z\"{u}rich, Switzerland.}
\email{konstantinos.karvounis@math.uzh.ch}

\author{S. Lambropoulou}
\address{Departament of Mathematics,
National Technical University of Athens,
Zografou campus, GR-157 80 Athens, Greece.}
\email{sofia@math.ntua.gr}
\urladdr{http://www.math.ntua.gr/$\wt{\ }$sofia}

\thanks{This research  has been co-financed by the European Union (European Social Fund - ESF) and Greek national funds through the Operational Program ``Education and Lifelong Learning" of the National Strategic Reference Framework (NSRF) - Research Funding Program: THALES: Reinforcement of the interdisciplinary and/or inter-institutional research and innovation.}

\keywords{classical braids, framed braids, Yokonuma--Hecke algebras, Markov trace, framed knots and links, E--condition, classical knots and links, transverse knots and links, singular braid monoid, singular knots and links, Homflypt polynomial.}

\subjclass[2010]{57M27, 57M25, 20F36, 20F38, 20C08}
\date{}

\begin{abstract}
In this paper we study properties of the Markov trace $\Jtr$ and the specialized trace $\Jtrs$ on the Yokonuma--Hecke algebras, such as behaviour under inversion of a word, connected sums and mirror imaging. We then define invariants for framed, classical and singular links through the trace $\Jtrs$ and also invariants for transverse links through the trace $\Jtr$. In order to compare the invariants for classical links with the Homflypt polynomial we develop computer programs and we evaluate them on several Homflypt-equivalent pairs of knots and links. Our computations lead to the result that these invariants are topologically equivalent to the Homflypt polynomial on \textit{knots}. However, they do not demonstrate the same behaviour on links.
\end{abstract}

\maketitle

\section*{Introduction}

In his pioneering work \cite{jo}, V.F.R. Jones constructed the Homflypt polynomial $P(q,\zet)$, an isotopy invariant of classical knots and links, using the Iwahori--Hecke algebras ${\rm H}_n(q)$, the Ocneanu trace $\tau$ and the natural surjection of the classical braid groups $B_n$ onto the algebras ${\rm H}_n(q)$. In \cite{jula2} the Yokonuma--Hecke algebras have been used for constructing framed knot and link invariants following the method of Jones. 

\smallbreak
 The Yokonuma--Hecke algebra $\YH(q)$ is a quotient of the framed braid group ${\mathcal F}_n$ over a modular relation (Eq.~\ref{modular})  for the framing generators $t_j$  and a quadratic relation (Eq.~\ref{quadr}) for the braiding generators $g_i$, which involves certain idempotents $e_i$ (Eq.~\ref{ei}). We note that for $d=1$ the algebra ${\rm Y}_{1,n}(q)$ coincides with the algebra ${\rm H}_n(q)$. An appropriate inductive basis for $\YH(q)$ and a Markov trace $\Jtr$ constructed on $\cup_n\YH(q)$  (\cite{ju}, see Theorem~\ref{trace}) pointed at the construction of framed link invariants from these algebras. The trace $\Jtr$ depends on parameter $z$ for the Markov property and parameters $x_1, \ldots, x_{d-1}$ for counting the framings.  As it turned out, the trace $\Jtr$ would re-scale to also respect negative stabilization only upon imposing the {\rm E}--condition, which is equivalent to a non-linear system of equations involving the trace parameters $x_1, \ldots, x_{d-1}$  (Eq.~\ref{Esystem}) \cite{jula2}. The complete set of solutions of the {\rm E}--system  was given by P.~G\'{e}rardin (cf. Appendix of \cite{jula2}) and he showed that they are parametrized by the non-empty subsets of $\Z/d\Z$. Then, specializing the trace parameters $x_1, \ldots, x_{d-1}$ to a solution of the {\rm E}--system $({\rm x}_1, \ldots , {\rm x}_{d-1})$ parametrized by the the subset $D$ of $\Z/d\Z$, we obtain the specialized trace $\Jtrs$ \cite{chla}. Consequently, J.~Juyumaya and S.~Lambropoulou defined 2-variable invariants for framed knots and links \cite{jula2}. Further, since the classical braid group $B_n$ embeds in the framed braid group  ${\mathcal F}_n$, classical links can be viewed as framed links with all framings zero. Thus, through the construction above we also obtain invariants for classical links \cite{jula3}. Further, via an appropriate homomorphism from the singular braid monoid to the Yokonuma--Hecke algebras, one can also define invariants for singular knots and links \cite{jula4}.
 
In this paper we adapt the constructions in \cite{jula2,jula3,jula4} for defining invariants for framed, classical and singular knots and links respectively, which are denoted by $\Phi_{d,D}$, $\Theta_{d,D}$ and $\Psi_{d,D}$ respectively, using a different quadratic relation for the Yokonuma--Hecke algebra \cite{chpa}. Moreover, we construct invariants for transverse knots and links, for which no use of the {\rm E}--condition is needed. 
Indeed, as we observe, the braid equivalence in the class of transverse knots and links requires only positive stabilization \cite{OrSh,Wr}, making the algebras $\YH(q)$ a natural algebraic companion. This leads to the construction of a ($d+1$)-variable transverse link invariant, 
$M_d(q, z, x_1, \ldots , x_{d-1})$, via the algebras $\YH(q)$ and the trace $\Jtr$ (see Theorem~\ref{transverseinv}).

\smallbreak
We focus now on the class of classical knots and links. The invariants of classical knots and links defined via the Yokonuma--Hecke algebras need to be compared with other known invariants, especially with the Homflypt polynomial  $P(q,z)$.  The first result in this direction was obtained in \cite{chla} where it was shown that these invariants coincide with the polynomial  $P(q,z)$ only when we are in the group algebra ($q=\pm 1$) or when $\Jtr(e_i) = 1$, which is equivalent to the solution of the {\rm E}--system comprising the $d$-th roots of unity. Further, it was shown in \cite{chla}  that there is no algebra homomorphism between the Yokonuma--Hecke algebra and the Iwahori--Hecke algebras which respects the trace rules, except when $\Jtr(e_i) = 1$. Moreover, it is very difficult to make a comparison of the invariants diagrammatically, since the skein relation of the framed link invariants \cite[Proposition~7]{jula2} has no topological meaning (see Figure~\ref{framed_skein}) for the classical link invariants \cite{jula3}.  
 Yet, the classical link invariants may be topologically equivalent with the polynomial $P$, in the sense that they do not distinguish more or less knot and link pairs. 
 Computations were now required, but their complexity would increase drastically due to the appearance of the $e_i$'s in the quadratic relation.
Consequently, K.~Karvounis and M.~Chmutov developed independently computational packages, which were cross-checked. The invariants $\Theta_{d,D}$ were computed on several pairs of distinct knots sharing the same Homflypt polynomial, for small values of $d$. These computations indicated that these invariants also do not distinguish those pairs of knots. They also led S.~Jablan and K.~Karvounis to formulate conjectures about the comparison of the invariants  $\Theta_{d,D}$ and the Homflypt polynomial on knots and also on their behaviour under mirror imaging (see Proposition~\ref{mirror}). Namely, the following Conjecture, which is now a Theorem proved in the sequel paper \cite{chjukala} and which comprises our main result in this paper, provides a direct derivation of the polynomial $P$ from the invariants $\Theta_{d,D}$ for the case of \textit{knots}:

\begin{nono-thm}[Comparison with the Homflypt polynomial]\label{thm2}
Given a solution $X_D$ of the {\rm E}--system, for any braid $\a \in B_{n}$ such that $\widehat{\a}$ is a knot, we have that:
\begin{center}
$\Theta_{d,D} (\widehat{\a})(q,z) = \Theta_{1,\{0\}} (\widehat{\a})(q,d\,z) = P(\widehat{\a})(q,d\,z)$
\end{center}
or equivalently with a change of variables:
\begin{center}
$\Theta_{d,D} (\widehat{\a})(q,\lambda_D) =\Theta_{1,\{0\}} (\widehat{\a})(q,\lambda_D)$.
\end{center}
\end{nono-thm}

Surprisingly, Theorem~\ref{thm2} does not hold for \textit{links}, as we demonstrate in \S\ref{links}. In \S\ref{conjclassic} and in the Appendix we  present lists of links and of knots respectively, sharing the same Homflypt polynomial. These were obtained via the program {\it LinKnot} \cite{jasa}, which we used for formulating our Conjecture.

\smallbreak
The paper is organized as follows. In \S\ref{yh} we recall the definition and basic relations in the Yokonuma--Hecke algebras $\YH(q)$. We use the new quadratic relation \cite{chpa}, which is more economical with respect to computations, and we adjust all equations needed. 
In \S\ref{traces} we recall the Ocneanu trace $\tau$ on the algebras ${\rm H}_n(q)$, and also the trace $\Jtr$ and the specialized trace $\Jtrs$ on the algebras $\YH(q)$. The passage from $\Jtr$ to $\Jtrs$ is via the E--system, which is also presented. In \S\ref{properties} we prove some important properties of the traces, which are known for the trace $\tau$. Namely, invariance under inversion of a braid word (satisfied by $\Jtr$, hence also by $\Jtrs$ and $\tau$), multiplicativity under the connected sum operation of $\Jtrs$  (hence also of $\tau$ but not of $\Jtr$), behaviour on split links of $\Jtr$ (hence also of $\Jtrs$ and $\tau$), and the mirroring effect of $\Jtrs$ under the mirror imaging of a braid word (hence also of $\tau$ but not of $\Jtr$). 
We continue with \S\ref{yhinvts} where we present the invariants from the Yokonuma--Hecke algebras for different classes of knots and links: framed, classical, singular and transverse, and where the formuli are adapted to the new quadratic relation. In \S\ref{trnsv} we give the construction of the transverse link invariant $M_d$. For each invariant we discuss which trace properties it inherits. In \S\ref{comp} we present our computational algorithm. In \S\ref{results_transverse}, computations on well--known families of transverse knots led to the observation that the invariants $M_d$ do not give something new. Finally, in \S\ref{conjclassic}, we present the computational data which culminate to Theorem~\ref{thm2} and we discuss the behaviour of the invariants $\Theta_{d,D}$ on links.

In a further development, for the case of classical knots and links J.~Juyumaya has conjectured that the trace $\Jtrs$ can be evaluated only by rules involving the generators $g_i$ and the elements $e_i$, thus treating the elements $e_i$ as formal elements. This fact is proved in the sequel paper \cite{chjukala}. Consequently, a computer program has been developed using this result for speeding up the calculations (\cite{ka}, see also \url{http://www.math.ntua.gr/~sofia/yokonuma}). In \cite{chjukala} we also prove Theorem~\ref{thm2} and we provide a concrete formula for relating the invariants $\Theta_{d,D}$ on links with the Homflypt polynomial. Further, we show that the invariants $\Theta_{d,D}$ distinguish six pairs of Homflypt-equivalent links (one of them is the fourth pair in Table~\ref{link_table}).

\bigbreak
We would like to thank the Referee for some useful comments. Also, the last two authors acknowledge with pleasure discussions with Maria Chlouveraki.
\smallbreak

Finally, we would like to mention that the involvement of Slavik Jablan has been crucial in the advancement of the comparison of the invariants $\Theta_{d,D}$ with the Homflypt polynomial. More precisely, the results in \cite{chjukala} are based on Theorem~\ref{thm2}, which would not have been formulated without the contribution of Slavik Jablan.

\section{The Yokonuma--Hecke algebra}\label{yh}
In this section we recall the definition of the Yokonuma--Hecke algebra as a quotient of the framed braid group.

\subsection{The framed braid group and the modular framed braid group} The {\it framed braid group}, ${\mathcal F}_n \cong \Z^n\rtimes B_n$, is the group  defined by the standard generators  $\s_1, \ldots, \s_{n-1}$ of the classical braid group $B_n$ together with the framing  generators  $t_1, \ldots, t_n$ ($t_j$ indicates framing 1 on the $j$th strand),  subject to the relations:
\begin{equation}\label{framedgp}
\begin{array}{ccrclcl}
\mathrm{(b_1)}& & \s_i\s_j\s_i & = & \s_j\s_i\s_j && \mbox{for $ \vert i-j\vert = 1$}\\
\mathrm{(b_2)}& & \s_i\s_j & = & \s_j\s_i & & \mbox{for $\vert i-j\vert > 1$}\\
\mathrm{(f_1)}& & t_i t_j & =  &  t_j t_i &&  \mbox{for all $ i,j$}\\
\mathrm{(f_2)}& & t_j \s_i & = & \s_i t_{s_i(j)} && \mbox{for all $ i,j$}
\end{array}
\end{equation}
where $s_i(j)$ is the effect of the transposition $s_i:=(i, i+1)$ on $j$. Relations $\mathrm{(b_1)}$ and $\mathrm{(b_2)}$ are the usual braid relations, while relations $\mathrm{(f_1)}$ and $\mathrm{(f_2)}$ involve the framing generators. Further, for a natural number $d$ the {\it modular framed braid group}, denoted
${\mathcal F}_{d,n}$, can be defined as the group with the presentation of the framed braid group, but including also the modular relations:
\begin{equation}\label{modular}
\begin{array}{ccrclcl}
\mathrm{(m})& & t_j^d   & =  &  1 && \mbox{for all $j$}
\end{array}
\end{equation}
Hence, ${\mathcal F}_{d,n}\cong (\Z/d\Z)^n\rtimes B_n$. Geometrically, the elements of ${\mathcal F}_{n}$ (respectively ${\mathcal F}_{d,n}$) are classical braids on $n$ strands with an integer (respectively an integer modulo $d$), the framing, attached to each strand. Further, due to relations $\mathrm{(f_1)}$ and $\mathrm{(f_2)}$, every framed braid $\a$ in ${\mathcal F}_{d,n}$ can be written in its {\it split form} as $\a = t_1^{k_1} \ldots t_n^{k_n} \s$, where $k_1, \ldots, k_{n-1} \in \Z$ and $\s$ involves only the standard generators of $B_n$. The same holds also for the modular framed braid group.

For a fixed $d$ we define the following elements $e_i$ in the group algebra ${\C}{\mathcal F}_{d,n}$:
\begin{equation}\label{ei}
e_i := \frac{1}{d}\sum_{1\leq s\leq d}t^s_i t^{-s}_{i+1} \qquad (1\leq i\leq n-1)
\end{equation}
where $-s$ is considered modulo $d$.
One can easily check that $e_i$ is an idempotent: $e_i^2=e_i$  and that $e_i\s_i=\s_ie_i$ for all $i$.

\subsection{The Yokonuma--Hecke algebra}
Let $d\in \N$ and let $q \in \C \backslash \{0\}$  fixed. The {\it Yokonuma--Hecke algebra}  $q$, denoted $\YH(q)$, is defined as the quotient of
$\C {\mathcal F}_{d,n}$ by factoring through the ideal generated by the expressions:
$ \s_i^2 - 1 - (q - q^{-1})e_i \s_i$ for $1\leq i\leq n-1$.
We shall denote $g_i$ the element in the algebra $\YH(q)$ corresponding to $\s_i$ while we keep the same notation for $t_j$ in the algebra $\YH(q)$. So, in $\YH(q)$ we have the following quadratic relations:

\begin{equation}\label{quadr}
g_i^2 = 1 + (q - q^{-1}) \, e_i \, g_i \qquad (1\leq i\leq n-1).
\end{equation}
The elements $g_i\in\YH(q)$ are invertible:
\begin{equation}\label{invrs}
g_i^{-1} = g_i - (q - q^{-1}) \, e_i \qquad (1\leq i\leq n-1).
\end{equation}

Further the elements $g_i \in \YH(q)$ satisfy the following relations:

\begin{lemma}
Let $i \in \{1,\ldots,n-1\}$. Then:
\begin{align*}
g_i^r &= (1-e_i)\,g_i + \left( \frac{q^r+q^{-r}}{q+q^{-1}} \right)e_ig_i +\left(  \frac{q^{r-1}-q^{-r+1}}{q+q^{-1}}\right) e_i &\text{ for $r$ odd},\\
g_i^r &= 1-e_i + \left( \frac{q^r-q^{-r}}{q+q^{-1}} \right)e_ig_i +\left(  \frac{q^{r-1}+q^{-r+1}}{q+q^{-1}}\right) e_i &\text{ for $r$ even}.
\end{align*}
\end{lemma}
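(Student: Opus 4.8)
The plan is to prove both identities simultaneously by induction on $r$, exploiting the two facts recorded just before the statement: $e_i$ is an idempotent, $e_i^2 = e_i$, and it commutes with $g_i$. The base cases are immediate. For $r=1$ the odd formula reads $g_i = (1-e_i)g_i + e_ig_i$, which is trivial. For $r=2$ the even formula reduces, after the simplifications $\frac{q^2-q^{-2}}{q+q^{-1}} = q-q^{-1}$ and $\frac{q+q^{-1}}{q+q^{-1}} = 1$, to $g_i^2 = 1 - e_i + (q-q^{-1})e_ig_i + e_i = 1 + (q-q^{-1})e_ig_i$, i.e.\ exactly the quadratic relation \eqref{quadr}.

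For the inductive step I would split every expression through the decomposition $1 = (1-e_i) + e_i$, multiply the formula for $g_i^r$ by $g_i$, and reduce any $g_i^2$ using \eqref{quadr}. The term carrying the factor $(1-e_i)$ collapses: since $e_i(1-e_i) = e_i - e_i^2 = 0$, one gets $(1-e_i)g_i^2 = (1-e_i)\bigl(1 + (q-q^{-1})e_ig_i\bigr) = (1-e_i)$, so on this component $g_i$ behaves like an involution and the leading term simply alternates between $(1-e_i)g_i$ (for $r$ odd) and $(1-e_i)$ (for $r$ even). The entire content therefore lives on the $e_i$-component, where, using $e_ig_ie_i = e_ig_i$, the quadratic relation becomes the Iwahori--Hecke relation $g_i^2 e_i = e_i + (q-q^{-1})e_ig_i$.

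Writing $g_i^r e_i = A_r\, e_ig_i + B_r\, e_i$, multiplication by $g_i$ combined with this relation yields the recursion $A_{r+1} = (q-q^{-1})A_r + B_r$ and $B_{r+1} = A_r$. The main task — entirely routine, and the only real work — is then to verify that the claimed coefficients satisfy this recursion and flip parity correctly. Starting from the odd-$r$ values $A_r = \frac{q^r+q^{-r}}{q+q^{-1}}$ and $B_r = \frac{q^{r-1}-q^{-r+1}}{q+q^{-1}}$, a short cancellation in the numerator of $(q-q^{-1})(q^r+q^{-r}) + (q^{r-1}-q^{-r+1})$ gives $A_{r+1} = \frac{q^{r+1}-q^{-r-1}}{q+q^{-1}}$ and $B_{r+1} = \frac{q^r+q^{-r}}{q+q^{-1}}$, which is precisely the even formula for $r+1$; the symmetric computation passing from even $r$ to odd $r+1$ closes the induction.

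Recombining the two components, $g_i^r = g_i^r(1-e_i) + g_i^r e_i$, reproduces the stated formulas verbatim. I expect no conceptual obstacle here: the idempotent splitting reduces everything to a one-dimensional linear recursion, and the only care needed is bookkeeping the $q$-power identities so that the odd/even cases interchange as claimed.
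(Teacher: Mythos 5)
Your proof is correct, and it is organized differently from the paper's. The paper proves the lemma by direct induction on $r$, split into the two parity cases: it multiplies the full three-term expression for $g_i^{2k-1}$ (resp.\ $g_i^{2k}$) by $g_i$, expands via the quadratic relation \eqref{quadr}, kills the cross term with $e_i-e_i^2=0$, and recombines the $q$-power coefficients by hand separately for each parity. You instead split through the orthogonal idempotents $e_i$ and $1-e_i$ at the outset: on the $(1-e_i)$-component $g_i$ is an involution (since $(1-e_i)g_i^2=1-e_i$), so that part of the formula is trivial, while on the $e_i$-component the relation $g_i^2e_i=e_i+(q-q^{-1})e_ig_i$ is the classical Iwahori--Hecke quadratic relation, and the whole computation collapses to the uniform two-term recursion $A_{r+1}=(q-q^{-1})A_r+B_r$, $B_{r+1}=A_r$, whose verification on the claimed coefficients you carry out correctly (I checked both parity transitions, and the base cases $A_1=1$, $B_1=0$ match). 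What your route buys is conceptual clarity and a parity-uniform induction: it explains \emph{why} the formula has this shape --- $g_ie_i$ generates a copy of the Hecke situation inside $e_i\YH(q)$, so the coefficients are forced to be the standard Hecke power coefficients --- and it halves the bookkeeping, since only one recursion is run instead of two separate inductive expansions. What the paper's brute-force version buys is that it needs no preliminary discussion of the idempotent splitting and verifies the exact displayed identities verbatim. One small caveat applies equally to both proofs: the argument as written covers only $r\geq 1$ (the paper even signals this with ``first we prove for $r>0$'' and then stops); if the lemma is meant for negative exponents as well, a sentence deducing the case $r<0$ --- e.g.\ from \eqref{invrs}, or by checking the recursion runs backwards since $B_r=A_{r-1}$ and $A_{r-1}=\left(A_r-B_r\cdot(q-q^{-1})\cdot 0\right)$-type inversion, more simply $A_{r-1}=B_r$ and $B_{r-1}=A_r-(q-q^{-1})B_r$ --- would be needed, but since you match the paper's stated scope this is not a gap relative to it.
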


\begin{proof}First we prove for $r>0$ with induction on $r$.
For $r=1$, the statement is clearly true. For $r=2$:
$$
g_i^{2} = 1-e_i + \left( \frac{q^2-q^{-2}}{q+q^{-1}} \right)e_ig_i +\left(  \frac{q+q^{-1}}{q+q^{-1}}\right) e_i
= 1-e_i + \left( q-q^{-1} \right)e_ig_i + e_i
= 1 + \left( q-q^{-1} \right)e_ig_i.
$$
Suppose the statement is true for any $r \in \N$ up to $2k-1$. Then for $r=2k$, we have that:
\begin{align*}
g_i^{2k-1}g_i &= (1-e_i)\,g_i^2 + \left( \frac{q^{2k-1}+q^{-2k+1}}{q+q^{-1}} \right)e_ig_i^2 +\left(  \frac{q^{2k-2}-q^{-2k+2}}{q+q^{-1}}\right) e_i g_i
\\
&= (1-e_i) + (q-q^{-1})(1-e_i)\,e_ig_i^2 + \left( \frac{q^{2k-1}+q^{-2k+1}}{q+q^{-1}} \right)e_i \\
&\quad + \left( \frac{q^{2k-1}+q^{-2k+1}}{q+q^{-1}} \right)(q-q^{-1}) e_ig_i +\left(  \frac{q^{2k-2}-q^{-2k+2}}{q+q^{-1}}\right) e_i\,g_i
\\
&= (1-e_i) + (q-q^{-1})\underbrace{(e_i-e_i^2)}_{=0}\,g_i^2 + \left( \frac{q^{2k-1}+q^{-2k+1}}{q+q^{-1}} \right)e_i \\
&\quad + \left( \frac{(q^{2k-1}+q^{-2k+1})(q-q^{-1})+q^{2k-2}-q^{-2k+2}}{q+q^{-1}} \right) e_ig_i 
\\
&= (1-e_i) + \left( \frac{q^{2k}-q^{-2k}}{q+q^{-1}} \right) e_ig_i + \left( \frac{q^{2k-1}+q^{-2k+1}}{q+q^{-1}} \right)e_i.
\end{align*}
Also for $r=2k+1$, we have that:
\begin{align*}
g_i^{2k}g_i &= (1-e_i)g_i + \left( \frac{q^{2k}-q^{-2k}}{q+q^{-1}} \right)e_ig_i^2 +\left(  \frac{q^{2k-1}+q^{-2k+1}}{q+q^{-1}}\right) e_i g_i
\\
&= (1-e_i)g_i + \left( \frac{q^{2k}-q^{-2k}}{q+q^{-1}} \right)e_i + \left( \frac{q^{2k}-q^{-2k}}{q+q^{-1}} \right)(q-q^{-1})e_ig_i +\left(  \frac{q^{2k-1}+q^{-2k+1}}{q+q^{-1}}\right) e_i g_i
\\
&= (1-e_i)g_i + \left( \frac{(q^{2k}-q^{-2k})(q-q^{-1})+q^{2k-1}+q^{-2k+1}}{q+q^{-1}} \right)e_ig_i + \left( \frac{q^{2k}-q^{-2k}}{q+q^{-1}} \right)e_i
\\
&= (1-e_i)g_i + \left( \frac{q^{2k+1}+q^{-2k-1}}{q+q^{-1}} \right)e_ig_i + \left( \frac{q^{2k}-q^{-2k}}{q+q^{-1}} \right)e_i.
\end{align*}
\end{proof}
The algebra $\YH(q)$ has linear dimension $n!\ d^n$ and the elements $t_1^{k_1}t_2^{k_2}\dots t_n^{k_n}g_w$, where $0\leq k_1,\dots,k_n<d$ and the subscript $w$ runs over all elements of the permutation group $\mathfrak{S}_{n}$, form a linear basis for $\YH(q)$. This basis gives rise to an inductive linear basis, as follows: Let $w_{n+1}$ be a basis element in ${\rm Y}_{n+1}(q)$. Then either $w_{n+1} = w_n g_n g_{n-1} \ldots g_i$ or $w_{n+1} = t_{n+1}^k w_n$, where $w_n$ is an element of the basis of $\YH(q)$.

The Yokonuma--Hecke algebras were originally introduced  by T. Yokonuma \cite{yo}  in the representation theory of finite Chevalley groups and they are natural generalizations of the Iwahori--Hecke algebras ${\rm H}_n(q)$. Indeed, for $d=1$ all framings are zero, so the corresponding elements of ${\mathcal F}_n$ are identified with elements in $B_n$; also we have $e_i=1$, so the quadratic relation~\eqref{quadr} becomes the well--known quadratic relation of the algebra ${\rm H}_n(q)$:
\begin{equation}\label{hquadr}
g_i^2 = 1 + (q - q^{-1}) \, g_i \qquad (1\leq i\leq n-1).
\end{equation}
Thus, the algebra ${\rm Y}_{1,n}(q)$ coincides with the   algebra ${\rm H}_n(q)$. The Yokonuma--Hecke algebras can be also regarded as unipotent algebras in the sense of \cite{thi}. The representation theory of these algebras has been studied in \cite{thi} and  \cite{chpa}. In \cite{chpa} a completely combinatorial approach is taken to the subject.

\begin{note} \rm In this paper we will sometimes identify algebra monomials with their corresponding braid words. 
\end{note}

\begin{rem} \rm \label{oldpresent}
In the papers \cite{jula1, jula2, jula3, jula4, jula5}, the algebra $\YH(q)$ is defined with parameter $u$ and $\YH(u)$ is generated by the elements $\wt{g}_1, \ldots, \wt{g}_{n-1}$ and $t_1, \ldots, t_n$, satisfying the relations \eqref{framedgp} (where $\wt{g}_i$ corresponds to $\sigma_i$) and the quadratic relations:

\begin{center}
$\wt{g}_i^2 = 1 + (u - 1) \, e_i + (u - 1) \, e_i \wt{g}_i \qquad (1\leq i\leq n-1).$
\end{center}

\noindent The new presentation of $\YH(q)$ used in this paper was obtained in \cite{chpa} by taking $u:=q^2$ and $g_i := \wt{g}_i + (q^{-1}-1)\, e_i \wt{g}_i$ or, equivalently, $\wt{g}_i = g_i + (q-1) \, e_i g_i$.
\end{rem}

\section{Markov traces on the Yokonuma--Hecke algebras}\label{traces}
In this section we recall the definition of a unique Markov trace defined on the algebras $\YH(q)$, as well as a necessary condition on the trace parameters, needed for obtaining framed link invariants.

\subsection{The Juyumaya trace}
By the natural inclusions ${\mathcal F}_n \subset {\mathcal F}_{n+1}$, which induce the  inclusions $\YH(q) \subset {\rm Y}_{d,n+1}(q)$, and using the inductive bases of the algebras $\YH(q)$ we have:

\begin{thm}[\cite{ju}]\label{trace}
For $z$, $x_1, \ldots, x_{d-1}$ indeterminates over $\C$
there exists a unique linear map
$$
\Jtr : \bigcup_{n \geq 0} \YH(q) \longrightarrow \C [z, x_1, \ldots, x_{d-1}]
$$
 satisfying the rules:
$$
\begin{array}{crcll}
(1) & \Jtr(\a \b) & = & \Jtr(\b \a) & \qquad \a,\b\in\YH(q) \\
(2) & \Jtr(1) &  = & 1 & \qquad 1 \in\YH(q) \\
(3) & \Jtr(\a g_n) & = & z\, \Jtr(\a) & \qquad \a\in\YH(q) \quad (\text{Markov  property} ) \\
(4) & \Jtr(\a t_{n+1}^s) & = & x_s\, \Jtr(\a) & \qquad \a\in\YH(q) \quad (1\leq s\leq d-1).
\end{array}
$$
\end{thm}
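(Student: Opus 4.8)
The plan is to construct $\Jtr$ by induction on $n$, using the inductive linear basis of $\YH(q)$ recalled above, and to treat uniqueness and existence separately. Throughout, write $\YH(q)\subset{\rm Y}_{d,n+1}(q)$ for the natural inclusion and record that every inductive basis element of ${\rm Y}_{d,n+1}(q)$ has the form $t_{n+1}^k\,w_n\,g_ng_{n-1}\cdots g_i$, where $w_n$ is a basis element of $\YH(q)$, $0\leq k<d$, and the braiding suffix $g_ng_{n-1}\cdots g_i$ may be empty. The four rules are adapted to this filtration: rule (4) reads off the framing prefix, rule (3) absorbs a terminal $g_n$, rule (2) anchors the recursion, and rule (1) binds them together.

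I would dispose of uniqueness first, as it is the softer half. Assume $\Jtr$ satisfies (1)--(4); I claim its value on each basis element of ${\rm Y}_{d,n+1}(q)$ is forced, by descending to $\YH(q)$. If the braiding suffix is empty the element is $t_{n+1}^k w_n=w_n t_{n+1}^k$ (using that $t_{n+1}$ commutes with every generator occurring in $w_n$, by relation $\mathrm{(f_2)}$ since $s_i$ fixes $n+1$ for $i<n$), so rule (4) gives $x_k\,\Jtr(w_n)$ with $w_n\in\YH(q)$. If the suffix is present, $\a=w_n g_ng_{n-1}\cdots g_i$ with $g_{n-1}\cdots g_i\in\YH(q)$, so by rule (1) I rotate this factor to the front and by rule (3) obtain $\Jtr(\a)=z\,\Jtr(g_{n-1}\cdots g_i\,w_n)$, again landing in $\YH(q)$. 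Iterating this reduction bottoms out at $\Jtr(1)=1$, so at most one such $\Jtr$ exists.

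For existence I would give an explicit recursive definition and then verify the rules. Set $\Jtr(1)=1$ and $\Jtr(t_1^s)=x_s$ on ${\rm Y}_{d,1}(q)=\C[\Z/d\Z]$. Assuming $\Jtr$ defined on $\YH(q)$, I define it on the inductive basis of ${\rm Y}_{d,n+1}(q)$ by $\Jtr(t_{n+1}^k w_n):=x_k\,\Jtr(w_n)$ (with $x_0:=1$) when the braiding suffix is empty, and otherwise by first sliding the framing past the new generator via relation $\mathrm{(f_2)}$, namely $t_{n+1}^k g_n=g_n t_n^k$, to rewrite the element as $w_n\,g_n\,(t_n^k g_{n-1}\cdots g_i)$ with both outer factors in $\YH(q)$, and then setting $\Jtr:=z\,\Jtr(w_n\,t_n^k g_{n-1}\cdots g_i)$; extending linearly yields a well-defined functional, since the recursion always terminates in $\YH(q)$. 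Rules (2),(3),(4) then hold by construction (for a basis element a terminal $g_n$ or a framing factor is peeled off directly, and the general statements follow by linearity). The whole content is therefore the trace property (1), and for it I would use the standard reduction: it suffices to prove $\Jtr(\a\g)=\Jtr(\g\a)$ for every basis element $\a$ and every generator $\g\in\{t_1,\ldots,t_{n+1},g_1,\ldots,g_n\}$, since $\Jtr(\a\b)=\Jtr(\b\a)$ then follows for all $\a,\b$ by induction on the generator length of $\b$.

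The main obstacle is exactly this generator-by-generator check, and it concentrates in the cases $\g=g_n$ and $\g=t_{n+1}$ involving the top strand, where the reduction rules collide with the quadratic relation $g_n^2=1+(q-q^{-1})e_ng_n$ and the idempotent $e_n=\tfrac1d\sum_{1\leq s\leq d}t_n^st_{n+1}^{-s}$. When $\a$ itself ends in $g_n$ or carries a factor $t_{n+1}$, commuting the extra generator across it forces appeals to the braid relation $g_ng_{n-1}g_n=g_{n-1}g_ng_{n-1}$, to relation $\mathrm{(f_2)}$, and to the explicit normal forms for powers $g_n^r$ from the Lemma above, so that each product can be returned to the inductive basis before $\Jtr$ is applied; the idempotent contributes trace values such as $\Jtr(e_n)=\tfrac1d\bigl(1+\sum_{s=1}^{d-1}x_sx_{d-s}\bigr)$, and the verification reduces to matching the two sides as polynomial identities in $z,x_1,\ldots,x_{d-1}$. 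By contrast the generators $t_j$ with $j\leq n$ and $g_i$ with $i\leq n-1$ are handled by the inductive hypothesis on $\YH(q)$, because the reduction rules defining $\Jtr$ are equivariant under left and right multiplication by $\YH(q)$. Assembling these cases establishes rule (1) on ${\rm Y}_{d,n+1}(q)$, completing the induction and producing $\Jtr$ on $\bigcup_n\YH(q)$.
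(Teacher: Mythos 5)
Note first that the paper itself contains no proof of this statement: it is imported verbatim from Juyumaya \cite{ju}, so the only meaningful comparison is with that source. Your outline does follow the same route as Juyumaya's construction (induction on $n$ via the inductive basis, uniqueness by forced reduction, existence by a recursive definition, then the trace property checked generator by generator). The uniqueness half is sound, including the correct use of relation $\mathrm{(f_2)}$ and the fact that $t_{n+1}$ commutes with everything in ${\rm Y}_{d,n}(q)$; likewise your observation that for a basis element $w$ of $\YH(q)$ both $w\,g_n$ and $t_{n+1}^k w$ are again inductive basis elements of ${\rm Y}_{d,n+1}(q)$, so that rules (2)--(4) hold by construction after linear extension, is correct and disposes of the easy part cleanly.

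The genuine gap is exactly where you locate ``the main obstacle'' and then stop: rule (1) in the cases $\gamma=g_n$ and $\gamma=t_{n+1}^s$ is asserted to ``reduce to matching the two sides as polynomial identities,'' but nothing is carried out, and this verification is essentially the entire content of the theorem. Concretely, checking $\Jtr(\alpha g_n)=\Jtr(g_n\alpha)$ when $\alpha$ itself carries a braiding suffix ending in $g_n$ forces you to evaluate $\Jtr$ on products of the form $g_n\, a\, g_n$ with $a\in\YH(q)$; the quadratic relation \eqref{quadr} then injects $e_n=\frac1d\sum_s t_n^s t_{n+1}^{-s}$, and -- as the paper itself emphasizes in \eqref{aen} -- $\Jtr(a\,e_n)$ does \emph{not} factor as $\Jtr(e_n)\Jtr(a)$, so the identity to be matched involves genuinely mixed terms $t_n^s t_{n+1}^{-s}$ whose reductions under rules (3) and (4) must be tracked term by term; it is not a routine bookkeeping step, and its failure to simplify is precisely what later gives rise to the {\rm E}--system. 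In Juyumaya's paper this is handled by introducing and analyzing a relative trace (conditional expectation) ${\rm Y}_{d,n+1}(q)\to\YH(q)$ together with lemmas on $g_n a g_n$, and that analysis occupies the bulk of the argument. The same applies to $\gamma=t_{n+1}^s$ against words ending in $g_n$, where $t_{n+1}$ fails to commute with $g_n$. So your proposal is a correct skeleton of the standard proof with the load-bearing computation missing; to complete it you would need to either reproduce the relative-trace lemmas or verify the generator commutations explicitly on the inductive basis, including the $e_n$ contributions.
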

Note that for $d=1$ the trace restricts to the first three rules and it coincides with the  Ocneanu trace on the Iwahori--Hecke algebras.

\begin{note} \rm In this paper we will sometimes write $\Jtr(\a)$ for a framed braid $\a \in {\mathcal F}_n$, by using the natural epimorphism of ${\mathcal F}_n$ onto $\YH$.
\end{note}

\subsection{The {\rm E}--system} 
Using the natural epimorphism of the framed braid group ${\mathcal F}_n$ onto $\YH(q)$, the trace $\Jtr$ and the Markov framed braid equivalence, comprising conjugation in the groups ${\mathcal F}_n$ and positive and negative stabilization and destabilization (see for example \cite{KS}), in \cite{jula2} the authors tried to obtain a topological invariant for framed links after the method of V.F.R.~Jones \cite{jo} (using for the algebra the presentation discussed in Remark~\ref{oldpresent}). This meant that $\Jtr$ would have to be normalized, so that the closed braids $\widehat{\a}$ and $\widehat{\a \s_n}$ $(\a\in {\mathcal F}_n)$ be assigned the same value of the invariant, and re-scaled, so that the closed braids $\widehat{\a \s_n^{-1}}$ and $\widehat{\a \s_n}$  $(\a\in {\mathcal F}_n)$ be also assigned the same value  of the invariant.  However, as it turned out, $\Jtr(\a g_n^{-1})$ does not factor through $\Jtr(\a)$, that is: 
\begin{equation}\label{nofactor}
\Jtr(\a g_n^{-1}) 
\stackrel{(\ref{invrs})}{=}
\Jtr(\a g_n) - (q - q^{-1})\, \Jtr(\a e_n) \neq \Jtr(g_n^{-1}) \Jtr(\a).
\end{equation}
The reason is that, although $\Jtr(\a g_n) = z\, \Jtr(\a)$, yet 
 $\Jtr(\a e_{n})$ does not factor through $\Jtr(\a)$, that is:
 \begin{equation}\label{aen}
\Jtr(\a e_{n}) \neq \Jtr(e_{n}) \Jtr(\a),
\end{equation}
which is due to the fact that:
\begin{equation}\label{atn}
\Jtr(\a t_n^k) \neq \Jtr(t_n^k) \Jtr(\a) \quad k=1,\ldots,d-1.
\end{equation}
Forcing 
\begin{center}
$\Jtr(\a e_{n}) = \Jtr(e_{n}) \Jtr(\a)$
\end{center}
yields that the trace parameters $x_1, \ldots, x_{d-1}$ have to satisfy the ${\rm E}$-\emph{system}, the non-linear system of equations in $\C$:
\begin{equation}\label{Esystem}
E^{(m)} = {x}_m E \qquad (1\leq m \leq d-1)
\end{equation}
where
$$
E := E^{(0)} =\frac{1}{d}\sum_{s=0}^{d-1}{x}_{s}{x}_{d-s} = \Jtr(e_{i}) \qquad \mbox{and} \qquad  E^{(m)} :=\frac{1}{d} \sum_{s=0}^{d-1}{x}_{m+s}{x}_{d-s} \, ,
$$
where the sub-indices on the ${x}_j$'s are regarded modulo $d$ and ${x}_0:=1$ (see \cite{jula2}). As it was shown  by P.~G\'{e}rardin (in the Appendix of \cite{jula2}), the solutions of the {\rm E}--system are parametrized by the non-empty subsets of $\Z/d\Z$. For example, for every singleton subset $\{ m \}$ of $\Z/d\Z$, we have a solution of the ${\rm E}$--system given by:
\begin{equation}\label{Solution singleton}
{\rm x}_1 ={\rm exp}({{2 \pi m \sqrt{-1}}/{d}}) \qquad \text{ and } \qquad {\rm x}_k = {\rm x}_1^k  \,\,\,\,\, \text{ for }  k=2,\ldots, d-1.
\end{equation}

\subsection{The specialized trace}\label{strace} 

Let  $X_D := ({\rm x}_1, \ldots , {\rm x}_{d-1})$ be a solution of the {\rm E}--system parametrized by the non-empty subset $D$ of $\Z /d \Z$. We shall call {\it specialized trace}  the trace $\Jtr$ with the parameters $x_1,\ldots,x_{d-1}$ specialized to ${\rm x}_1, \ldots , {\rm x}_{d-1} \in \C$, and it shall be denoted $\Jtrs$ (cf. \cite{chla}). More precisely,
\begin{center}
$
\Jtrs : \bigcup_{n}\YH(q) \longrightarrow \C [z] 
$
\end{center}
is a Markov trace on the Yokonuma--Hecke algebra, satisfying the following rules:
$$
\begin{array}{ccccl}
(1) & \Jtrs(\a \b) & = & \Jtrs(\b \a) & \qquad \a,\b\in\YH(q) \\
(2) & \Jtrs(1) &  = & 1 & \qquad 1 \in\YH(q) \\
(3) & \Jtrs(\a g_n) & = & z\, \Jtrs(\a) & \qquad \a\in\YH(q) \quad (\text{Markov  property} ) \\
(4^\prime) & \Jtrs (\a t_{n+1}^s) & = & {\rm x}_s\,\Jtrs (\a) & \qquad \a\in\YH(q) \quad (1\leq s\leq d-1).
\end{array}
$$
\noindent
The rules (1)--(3) are the same as in Theorem~\ref{trace}, while rule (4) is replaced by the rule ($4^\prime$). As it turns out \cite{jula3}:  
$$
E_D := \Jtrs(e_{i}) = \frac{ 1}{\vert D\vert},
$$
where $\vert D\vert$ is the cardinal of the subset $D$. Note that  ${\rm tr}_{1, \{ 0\}}$  coincides with ${\rm tr}_1$ which in turn coincides with the Ocneanu trace $\tau$.

\section{Properties of the Markov traces}\label{properties}
We shall now give some properties of the traces $\Jtr$ and $\Jtrs$, analogous to known properties of the Ocneanu trace $\tau$, by considering their behaviour under the  operations below. Clearly, a property satisfied by $\Jtr$ is also satisfied by $\Jtrs$ (and by $\tau$), but the converse may not hold.

\subsection{Inversion of braid words} Inversion means that a braid word is written from right to left. 
For $\alpha=t_1^{k_1}\ldots t_n^{k_n} \s_{i_1}^{l_1} \ldots \s_{i_r}^{l_r} \in \mathcal{F}_n$, where $k_1,\ldots,k_n,l_1,\ldots,l_r \in \Z$, we denote by
$\overleftarrow{\a}$ the inverted word, that is, 
  $\overleftarrow{\a} = \s_{i_r}^{l_r} \ldots \s_{i_1}^{l_1}t_n^{k_n}\ldots t_1^{k_1}$.
On the level of closed braids this operation corresponds to the change of orientation on all components of the resulting link. On the algebra level inversion is defined on any word by linear extension.  We will see that the trace $\Jtr$ is invariant under this operation. Indeed:

\begin{prop}\label{invProp}
Let $a \in \YH(q)$. Then $\Jtr(a) = \Jtr(\overleftarrow{a})$. In particular, $\Jtr(\a) = \Jtr(\overleftarrow{\a})$ for all $\a \in  \mathcal{F}_n$.
\end{prop}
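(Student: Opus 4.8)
The plan is to exploit the uniqueness of the Markov trace in Theorem~\ref{trace}. Reading the inversion operation on generators, it is the $\C$-linear map $a\mapsto\overleftarrow{a}$ on $\bigcup_n\YH(q)$ determined by $\overleftarrow{g_i}=g_i$, $\overleftarrow{t_j}=t_j$ and the anti-multiplicativity rule $\overleftarrow{ab}=\overleftarrow{b}\,\overleftarrow{a}$; so my first task is to check that this is a well-defined algebra anti-automorphism. I would then define a second linear functional $\Jtr':=\Jtr\circ(\overleftarrow{\phantom{a}})$ and verify that it satisfies the four defining rules of Theorem~\ref{trace}. Uniqueness then forces $\Jtr'=\Jtr$, which is exactly the assertion $\Jtr(\overleftarrow{a})=\Jtr(a)$.

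First I would establish that the reversal map descends to the quotient, i.e. that it sends each defining relation to a relation. The braid relations $\mathrm{(b_1)}$, $\mathrm{(b_2)}$ and the framing relation $\mathrm{(f_1)}$ are visibly symmetric under reversal, and the modular relation $\mathrm{(m)}$ is fixed; relation $\mathrm{(f_2)}$, namely $t_j g_i = g_i t_{s_i(j)}$, reverses to $g_i t_j = t_{s_i(j)} g_i$, which is $\mathrm{(f_2)}$ itself after replacing $j$ by $s_i(j)$ and using that $s_i$ is an involution. For the quadratic relation \eqref{quadr} I would first record that $\overleftarrow{e_i}=e_i$ (immediate from \eqref{ei}, since the $t$'s commute) and that $e_i g_i = g_i e_i$; then reversal sends $g_i^2 = 1+(q-q^{-1})e_i g_i$ to $g_i^2 = 1+(q-q^{-1})g_i e_i$, which is the same relation. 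Since reversal of a word on the first $n$ strands stays on those strands, the map is compatible with the inclusions $\YH(q)\subset {\rm Y}_{d,n+1}(q)$ and hence well-defined on $\bigcup_n\YH(q)$.

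Next I would verify the four rules for $\Jtr'$. Rule $(2)$ is clear since $\overleftarrow{1}=1$. For the trace property $(1)$, $\Jtr'(\a\b)=\Jtr(\overleftarrow{\b}\,\overleftarrow{\a})=\Jtr(\overleftarrow{\a}\,\overleftarrow{\b})=\Jtr'(\b\a)$, using the cyclic invariance of $\Jtr$. For the Markov rule $(3)$, with $\a\in\YH(q)$, $\Jtr'(\a g_n)=\Jtr(g_n\overleftarrow{\a})=\Jtr(\overleftarrow{\a}\,g_n)=z\,\Jtr(\overleftarrow{\a})=z\,\Jtr'(\a)$, where I move $g_n$ to the right by cyclicity (legitimate because $\overleftarrow{\a}\in\YH(q)$) and then apply rule $(3)$ for $\Jtr$. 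Rule $(4)$ is identical, with $t_{n+1}^s$ in place of $g_n$ and $x_s$ in place of $z$. Thus $\Jtr'$ is a linear map satisfying all hypotheses of Theorem~\ref{trace}, so $\Jtr'=\Jtr$. The ``in particular'' statement for $\a\in\mathcal{F}_n$ then follows by applying the natural epimorphism $\mathcal{F}_n\to\YH(q)$, under which reversal of a framed braid word maps to the algebra reversal, so $\Jtr(\overleftarrow{\a})=\Jtr(\a)$.

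The only genuinely delicate point is the well-definedness of the reversal map as an anti-automorphism of $\YH(q)$ --- in particular that the quadratic relation survives reversal --- which hinges on the two small facts $\overleftarrow{e_i}=e_i$ and $e_i g_i = g_i e_i$. Once that is in place, everything else is a short application of cyclicity and uniqueness. Alternatively, one could avoid invoking uniqueness and argue directly by induction on the number of strands using the inductive basis, peeling off the top generator $g_n$ or $t_{n+1}^s$ via rules $(3)$/$(4)$ and reducing both $\Jtr(b)$ and $\Jtr(\overleftarrow{b})$ to the same expression in $\YH(q)$; but the uniqueness route is considerably shorter.
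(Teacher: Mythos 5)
Your proof is correct, and it takes a genuinely different route from the paper's. Both arguments share the same first step: checking that reversal descends to a $\C$-algebra antiautomorphism of $\YH(q)$ (the paper asserts this briefly by saying reversal respects relations \eqref{framedgp}, \eqref{modular} and \eqref{quadr}; your more careful verification via $\overleftarrow{e_i}=e_i$ and $e_ig_i=g_ie_i$ is exactly what is needed to see that the quadratic relation survives, and is a welcome expansion of the paper's one-line claim). After that the two proofs diverge: the paper argues by induction on $n$, evaluating $\Jtr$ directly on the inductive basis elements $w_ng_ng_{n-1}\cdots g_i$ and $w_nt_{n+1}^k$ of ${\rm Y}_{d,n+1}(q)$ and peeling off the top generator with rules (1), (3) and (4), whereas you define $\Jtr':=\Jtr\circ(\overleftarrow{\phantom{a}})$, check the four rules of Theorem~\ref{trace} for $\Jtr'$ (each verification is sound; in particular your use of cyclicity in ${\rm Y}_{d,n+1}(q)$ to move $g_n$ or $t_{n+1}^s$ past $\overleftarrow{\a}$, together with the fact that reversal preserves the subalgebra $\YH(q)$, is legitimate), and invoke the \emph{uniqueness} clause of Theorem~\ref{trace} to conclude $\Jtr'=\Jtr$. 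Your route is shorter and more conceptual, at the cost of leaning on the full uniqueness statement of Juyumaya's theorem --- which is itself proved via the inductive basis, so in a sense you outsource the induction rather than eliminate it; the paper's computation is more self-contained, using only the existence of the trace and its rules. Amusingly, the alternative you sketch in your closing sentence (induction on the inductive basis, peeling off $g_n$ or $t_{n+1}^s$) is precisely the proof the paper gives.
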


\begin{proof}
First, note that the linear map sending $a$ to $\overleftarrow{a}$ defines a $\C$-algebra antiautomorphism on $\YH(q)$, since it respects all defining relations of the Yokonuma--Hecke algebra (relations
\eqref{framedgp}, \eqref{modular} and \eqref{quadr}). Thus, we have 
\begin{equation}\label{arrow multi}
\overleftarrow{ab}=\overleftarrow{b}\overleftarrow{a} \quad \text{for all $a,b \in \YH(q)$}.
\end{equation}

For the proof of the proposition we proceed by induction on $n$. For $n=1$ the statement holds trivially. Assume now that the statement is true for all elements in $\YH(q)$. Since $\Jtr$ is a linear map, it is enough to prove the statement for all elements of the inductive basis $\mathcal{B}_{n+1}^{\rm ind}$ of ${\rm Y}_{d,n+1}(q)$.
Let $w_n \in \mathcal{B}_{n}^{\rm ind}$, $k \in \Z/d\Z$ and $i \in \{1,\ldots,n\}$. We have 
 $$ \Jtr(w_n g_n g_{n-1} \ldots g_i)
= z\, \Jtr(w_n g_{n-1} \ldots g_i)
= z\, \Jtr(\overleftarrow{w_n g_{n-1} \ldots g_i})$$
$$ = z\, \Jtr(g_i \ldots g_{n-1} \overleftarrow{w_n})
= \Jtr(g_i \ldots g_{n-1} g_n \overleftarrow{w_n})
= \Jtr(\overleftarrow{w_n g_n g_{n-1} \ldots g_i}) ,$$
where we have used rules (1) and (3) of the definition of $\Jtr$ for the first and fourth equalities, the induction hypothesis for the second equality and \eqref{arrow multi} for the third and fifth equalities, and
$$\Jtr(w_nt_{n+1}^k )
= x_k\, \Jtr(w_n)
= x_k\, \Jtr(\overleftarrow{w_n})
= \Jtr(t_{n+1}^k \overleftarrow{w_n} )
= \Jtr(\overleftarrow{  w_nt_{n+1}^k}),$$
where we have used rules (1) and (4) of the definition of $\Jtr$ for the first and third equalities, the induction hypothesis for the second equality and \eqref{arrow multi} for the last equality.
\end{proof}

\subsection{Split links} Let $L = L_1 \sqcup \ldots \sqcup L_m$ be a split link, where $L_1, \ldots, L_m$ are links. Then there exists a braid word $\a = \a_1 \ldots \a_m \in B_n$, where $\a_i \in B_{i_j} \setminus B_{i_{j-1}+1}$ for some $1 \leq i_1 < \ldots < i_k \leq n$ with $i_{j+1}-i_j>1$ such that $\widehat{\a} = L$ and $\widehat{\a_i} = \sqcup_{k=1}^{i_{j-1}} U \sqcup L_i$ for each $i=1,\ldots,m$. Then the traces $\Jtr$ and $\Jtrs$ are multiplicative, as follows:
\begin{prop}\label{split_trace}
Let $L = \widehat{\a} = \widehat{\a_1 \ldots \a_m}$ be a split link as above. Then:
$$
\Jtr(\a) = \Jtr(\a_1) \cdots \Jtr(\a_m),
$$
and consequently:
$$
\Jtrs(\a) = \Jtrs(\a_1) \cdots \Jtrs(\a_m).
$$
\end{prop}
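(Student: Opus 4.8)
The plan is to reduce everything to the structure of the Juyumaya trace $\Jtr$ and the fact that the split factors $\a_1, \ldots, \a_m$ act on disjoint blocks of strands. The key observation is that if $\a_i$ involves only the generators $\s_{i_{j-1}+1}, \ldots, \s_{i_j - 1}$ and the framing generators $t_{i_{j-1}+1}, \ldots, t_{i_j}$, while $\a_{i+1}$ involves generators with indices at least two higher (because $i_{j+1} - i_j > 1$ guarantees at least one ``empty'' strand between blocks), then the two words live in commuting subalgebras of $\YH(q)$. So the first step is to record this commutation: any generator appearing in $\a_i$ commutes with any generator appearing in $\a_{i'}$ for $i \neq i'$, using the braid relation $\mathrm{(b_2)}$, relation $\mathrm{(f_1)}$, and relation $\mathrm{(f_2)}$ (with $s_k(j) = j$ whenever $k, j$ lie in distinct blocks), together with the fact that each $e_i$ is a polynomial in the $t$'s.

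Second, I would run an induction on $m$, the number of split components, with the base case $m=1$ being vacuous. For the inductive step it suffices to handle $\Jtr(\a_1 \cdots \a_{m-1} \a_m)$ where $\beta := \a_1 \cdots \a_{m-1}$ lives in the subalgebra generated by strands $1, \ldots, i_{m-1}$ and $\a_m$ lives in the subalgebra on strands $i_{m-1}+1, \ldots, n$. The natural tool is the inductive definition of $\Jtr$ via the bases $\mathcal{B}_n^{\rm ind}$: write $\a_m$ as a linear combination of inductive basis elements of the top block and peel off the generators $g_n g_{n-1} \cdots$ and the framing powers $t_{n+1}^k$ one at a time using rules (3) and (4). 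Because $\beta$ commutes past all these generators and never involves the top strands, each application of rule (3) or (4) produces the factor $z$ or $x_k$ exactly as it would if $\beta$ were absent, so one obtains $\Jtr(\beta \a_m) = \Jtr(\beta)\, \Jtr(\a_m)$. Applying the induction hypothesis to $\Jtr(\beta)$ finishes the multiplicativity for $\Jtr$, and the statement for $\Jtrs$ follows at once by specializing the trace parameters, as noted in the remark preceding the proposition.

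The main obstacle, and the place where the argument must be made carefully, is the peeling step: one must justify that moving $\beta$ through a generator of the top block genuinely does not interfere with the recursive trace rules. The subtlety is that rules (3) and (4) are stated for appending $g_n$ or $t_{n+1}^s$ on the \emph{right} of an element of $\YH(q)$, so to apply them I need $\a_m$ written so that its highest-index generator is exposed, and I need $\beta$ to sit in $\YH[i_{m-1}]$ so that it is legitimately an element to which the inductive rule applies. The commutation established in the first step is exactly what licenses rewriting $\beta \a_m$ into the form $(\text{lower part})\cdot g_n$ or $(\text{lower part})\cdot t_{n+1}^{k}$ with the lower part lying in the smaller algebra. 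A clean way to phrase this is to prove the auxiliary fact that $\Jtr(\gamma\,\delta) = \Jtr(\gamma)\,\Jtr(\delta)$ whenever $\gamma \in \YH[i_{m-1}]$ and $\delta$ lies in the subalgebra on the disjoint top strands, which is itself proved by induction on the number of generators in $\delta$ via the inductive basis; this separates the bookkeeping from the geometric split structure and makes the role of the gap condition $i_{j+1}-i_j>1$ transparent.
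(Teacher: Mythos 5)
Your proposal is correct and follows essentially the same route as the paper, whose proof is just the one-line observation that in computing $\Jtr(\a_1\ldots\a_m)$ one ``exhausts'' the word $\a_m$ first via rules (1), (3) and (4) and then iterates, yielding $\Jtr(\a)=\Jtr(\a_1\ldots\a_{m-1})\,\Jtr(\a_m)$. Your commutation lemma for disjoint blocks and the induction through the inductive basis are simply a careful spelling-out of the peeling step the paper declares immediate, with the specialization to $\Jtrs$ handled identically.
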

\begin{proof}
The proof is immediate, since when calculating the trace $\Jtr(\a_1 \ldots \a_m)$ we exhaust first the word $\a_m$ and so $\Jtrs(\a) = \Jtr(\a_1 \ldots \a_{m-1}) \Jtr(\a_m)$. Iterating this procedure, we obtain the result.
\end{proof}

\subsection{Connected sums}  Let $\a \in {\mathcal F}_n$ and $\b \in {\mathcal F}_m$ for some $n,m \in \N$. The \emph{connected sum} of $\a$ and $\b$ is the word $\a \# \b: = \a^{[0]} \b^{[n-1]} $ in the framed braid group ${\mathcal F}_{n+m-1}$, where $\a^{[0]}$ is the natural embedding of $\a$ in ${\mathcal F}_{n+m-1}$, while  $\b^{[n-1]}$ is the  embedding of $\b$ in ${\mathcal F}_{n+m-1}$ induced by the following shifting of the indices: $\s_i \mapsto \s_{n+i-1}$ for $i \in \{1,\ldots,m-1\}$ and $t_j \mapsto t_{n+j-1}$ for $j \in \{1,\ldots,m\}$. Upon closing the braids, this operation corresponds to taking the connected sum of the resulting framed links.

It is known that the Ocneanu trace is multiplicative under the connected sum operation, that is, $\tau(\a \# \b ) = \tau(\a) \, \tau(\b)$ if $\a \in B_n$ and $\beta \in B_m$.
On the other hand, the trace $\Jtr$ is not multiplicative under the connected sum operation, due to \eqref{atn} and \eqref{aen} (we have $\a \# t_1^k = \alpha t_n^k$ and $\a \# e_1 = \a e_n$). Yet, as we will see, the specialized trace $\Jtrs$ is  multiplicative on connected sums, due to the E--condition, but this is only true on the level of classical braids. For framed braids this is true only when $E_D=1$, that is, when $D$ is singleton and hence the corresponding solution $X_D$ of the E--system is described by (\ref{Solution singleton}). Indeed, for classical braids we have:

\begin{prop} \label{connsum}
Let $\a \in B_n$ and $\beta \in B_m$. Then $\Jtrs(\a \# \b ) = \Jtrs(\a) \, \Jtrs(\b)$.
\end{prop}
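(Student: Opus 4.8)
The plan is to reduce the connected-sum statement to the Markov property (rule (3)) and the trace-of-braids formula, exploiting the fact that for \emph{classical} braids no framing generators $t_j$ appear, so the obstructions \eqref{atn} and \eqref{aen} never get triggered. Concretely, write $\a \# \b = \a^{[0]} \b^{[n-1]}$ in $B_{n+m-1}$, where $\a^{[0]}$ involves only $\s_1,\ldots,\s_{n-1}$ and $\b^{[n-1]}$ involves only $\s_n,\ldots,\s_{n+m-2}$. The key structural observation is that $\a^{[0]}$ and $\b^{[n-1]}$ have \emph{disjoint, non-adjacent} strand supports in the sense that their braiding generators commute except at the single shared strand $n$; in particular $\b^{[n-1]}$ lives entirely in the ``upper'' strands. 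Because $\Jtrs$ is computed by the inductive basis, the idea is to evaluate $\Jtrs(\a^{[0]} \b^{[n-1]})$ by pushing the computation through the generators of $\b^{[n-1]}$ one at a time, from the top strand $n+m-2$ downward, using rule (3) each time a top generator $g_{n+m-2}, g_{n+m-3}, \ldots$ is the last letter available.

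The main steps I would carry out are as follows. First, I would express $\b^{[n-1]}$ in the inductive basis of $\mathrm{Y}_{d,n+m-1}(q)$ over $\mathrm{Y}_{d,n}(q)$, so that each basis element of $\b$ contributes a string $g_{k}g_{k-1}\cdots g_{j}$ in the shifted indices. Since $\a^{[0]}$ does not touch any strand above $n$, it commutes past all these shifted generators and behaves as a scalar-carrying factor under the trace: applying rule (3) repeatedly to strip off the highest-index generator of $\b^{[n-1]}$ produces exactly a factor of $z$ each time, and crucially the residual word after each stripping is again of the form (classical word on lower strands)\,$\times$\,(shorter shifted word), so the hypotheses of rule (3) remain in force. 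Iterating, the whole $\b$-part collapses to $\Jtrs(\b)$ times the trace of $\a^{[0]}$ regarded in $\mathrm{Y}_{d,n}(q)$, which is $\Jtrs(\a)$. This is essentially the same ``exhaust the top word first'' mechanism used in the proof of Proposition~\ref{split_trace}, and I would phrase it by induction on the number of braiding generators in $\b$.

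The step I expect to be the genuine obstacle — and the reason the statement is restricted to $B_n$ rather than $\mathcal{F}_n$ — is verifying that no framing elements ever intervene during the reduction. For classical $\a,\b$ the split forms contain no $t_j^k$ factors, so rule $(4')$ is never invoked and the problematic non-factorization \eqref{atn}, \eqref{aen} cannot arise; the delicate point is that when the quadratic relation \eqref{quadr} is used to bring $\b^{[n-1]}$ into inductive-basis form, it introduces idempotents $e_i$ (hence implicitly framings) through the term $(q-q^{-1})e_i g_i$. I would argue that these $e_i$ only appear at indices $i \geq n$, i.e.\ strictly inside the $\b$-block, so that when the trace strips the top strand the relevant quantity to control is $\Jtrs(\a' e_{n+m-2} g_{n+m-2})$-type expressions, and here the $\mathrm{E}$-condition $\Jtrs(\a e_{n}) = E_D\,\Jtrs(\a)$ (satisfied precisely because $X_D$ solves the $\mathrm{E}$-system) guarantees that these idempotents \emph{do} factor through the trace. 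Thus the E-condition is exactly what rescues multiplicativity, and I would make this the crux of the argument, checking carefully that the induction on $\b$ only ever produces $e_i$ with $i$ in the $\b$-block so that the factorization applies and the recursion closes to give $\Jtrs(\a)\,\Jtrs(\b)$.
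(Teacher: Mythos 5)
Your proposal takes essentially the same route as the paper's proof: you evaluate $\Jtrs(\a^{[0]}\b^{[n-1]})$ by exhausting the $\b$-block first, exactly as in Proposition~\ref{split_trace}, and you use the fact that the coefficients of the quadratic relation \eqref{quadr} are independent of the index $i$ to identify the collapsed factor with $\Jtrs(\b)$. Your explicit invocation of the E--condition $\Jtrs(\a e_n) = E_D\,\Jtrs(\a)$ to handle the idempotents produced inside the $\b$-block is precisely what the paper compresses into the remark that $\Jtrs$ ``factors through positive and negative stabilization'' (which holds exactly because the parameters are specialized to a solution $X_D$ of the {\rm E}--system), so your account is a correct and somewhat more detailed version of the same argument.
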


\begin{proof}
By the definition of $\a \# \b$, $\a^{[0]}$ is a word containing the generators $\s_1, \ldots, \s_{n-1}$ and $\b^{[n-1]}$ is a word containing the generators $\s_n, \ldots, \s_{n+m-2}$. Therefore,  in computing $\Jtrs (\a \# \b )$ we will first exhaust the word $\b^{[n-1]}$.  Further, since the polynomial coefficients of the quadratic relation \eqref{quadr} do not depend on the index of the generator $g_i$ and since $\Jtrs$ factors through positive and negative stabilization, we have that $\Jtrs(\b) = \Jtrs(\b^{[n-1]})$. Hence,
\begin{center}
$\Jtrs (\a \# \b ) = \Jtrs(\a^{[0]} \b^{[n-1]}) = \Jtrs(\a) \, \Jtrs(\b^{[n-1]}) = \Jtrs(\a)\, \Jtrs(\b).$
\end{center}
\end{proof}

\begin{rem} \rm
Proposition~\ref{connsum} holds independently of the component we choose to connect the two braids. 
Indeed, connecting $\b$ to a different component of $\a$ means that we connect $\b$ to a conjugate word of $\a$. Let $\wt{a}$ be such a word. Then:
$$\Jtrs (\a \# \b ) = \Jtrs(\a^{[0]}) \Jtrs(\b^{[n-1]}) = \Jtrs(\wt{a}^{[0]}) \, \Jtrs(\b^{[n-1]}) = \Jtrs(\wt{a} \# \b).$$
\end{rem}

For framed braids we have the following:

\begin{prop}
Let $\a \in {\mathcal F}_n$ and $\b \in {\mathcal F}_m$. Then $\Jtrs(\alpha \# \beta) = \Jtrs(\alpha)\Jtrs(\beta)$ if and only if  ${\rm x}_1^d=1$ and ${\rm x}_k={\rm x}_1^k$ for all $k=1,\ldots,d-1$.
 \end{prop}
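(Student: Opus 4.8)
The statement is an equivalence, and it is worth recording at the outset that the right-hand condition is exactly the requirement that $k \mapsto {\rm x}_k$ be a group homomorphism $\Z/d\Z \to \C^{\times}$; by \eqref{Solution singleton} this singles out precisely the solutions $X_D$ with $D$ a singleton, equivalently $E_D=1$. The whole difficulty is localized on the one strand the two braids share under $\#$: since $\a\#\b=\a^{[0]}\b^{[n-1]}$ and $\b^{[n-1]}$ only touches strands $n,\dots,n+m-1$, multiplicativity is governed by how framings on the shared strand $n$ behave, i.e. by the failure recorded in \eqref{atn} (whereas the analogous issue \eqref{aen} for the $e_i$'s is already cured by the {\rm E}--system, so it plays no adverse role here).

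For the forward (``only if'') direction I would simply test multiplicativity on framed unknots. Taking $\a=t_1^b$ and $\b=t_1^j$ in ${\mathcal F}_1$ gives $\a\#\b=t_1^{b+j}$, so rule~$(4^\prime)$ yields $\Jtrs(\a\#\b)={\rm x}_{b+j}$ while $\Jtrs(\a)\,\Jtrs(\b)={\rm x}_b{\rm x}_j$. Hence multiplicativity forces ${\rm x}_{b+j}={\rm x}_b{\rm x}_j$ for all $b,j$ (indices mod $d$, with ${\rm x}_0=1$); that is, $k\mapsto{\rm x}_k$ is multiplicative, whence ${\rm x}_k={\rm x}_1^k$ and ${\rm x}_1^d={\rm x}_0=1$.

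For the converse I would show that, under the stated condition, $\Jtrs$ factors through the Iwahori--Hecke algebra. Put $\zeta:={\rm x}_1$, so $\zeta^d=1$, and consider the assignment $g_i\mapsto g_i$, $t_j\mapsto\zeta$: since $e_i\mapsto\frac1d\sum_{s}\zeta^s\zeta^{-s}=1$, the quadratic relation \eqref{quadr} maps to \eqref{hquadr} and the modular relation to $\zeta^d=1$, so this defines a $\C$-algebra homomorphism $\rho:\YH(q)\to{\rm H}_n(q)$. One checks that $\tau\circ\rho$ satisfies rules (1)--(3) and $(4^\prime)$ of the specialized trace (using ${\rm x}_k=\zeta^k$ for $(4^\prime)$), so by uniqueness of the trace satisfying these rules $\Jtrs=\tau\circ\rho$. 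Writing $\rho(\a)=\zeta^{w(\a)}\overline{\a}$, where $w(\a)$ is the total framing and $\overline{\a}\in{\rm H}_n(q)$ is the underlying classical braid, one gets $\rho(\a\#\b)=\zeta^{w(\a)+w(\b)}(\overline{\a}\#\overline{\b})$, and multiplicativity of $\tau={\rm tr}_{1,\{0\}}$ on connected sums of classical braids (Proposition~\ref{connsum} with $d=1$) gives $\Jtrs(\a\#\b)=\tau(\rho(\a\#\b))=\Jtrs(\a)\,\Jtrs(\b)$.

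The relation-checks and the commutations are routine; the genuine obstacle is the converse, concretely the identity $\Jtrs(\g\, t_n^j)={\rm x}_j\,\Jtrs(\g)$ on the shared strand for every $\g\in\YH(q)$. Packaging it through $\rho$ makes it immediate, since the scalar $\zeta^j=\rho(t_n^j)$ pulls straight out of $\tau$. If one prefers to avoid the homomorphism, one must instead exhaust $\b^{[n-1]}$ exactly as in the proof of Proposition~\ref{connsum}, argue that this reduction deposits on strand $n$ only framings $t_n^j$ with coefficients $c_j$ that are independent of $\a^{[0]}$ and satisfy $\sum_j c_j{\rm x}_j=\Jtrs(\b)$, and then apply $\Jtrs(\a^{[0]}t_n^j)={\rm x}_j\,\Jtrs(\a^{[0]})$; verifying that this last identity holds precisely under the character condition (and fails through \eqref{atn} otherwise) is the crux of that alternative route.
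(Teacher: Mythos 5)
Your proposal is correct and takes essentially the same route as the paper: the identical test on framed unknots $t_1^k\#t_1^l$ for the ``only if'' direction, and for the converse the identical factorization of $\Jtrs$ as $\tau$ composed with the algebra homomorphism $\YH(q)\to{\rm H}_n(q)$ sending $t_j\mapsto{\rm x}_1$, followed by multiplicativity of $\tau$ on connected sums of classical braids. The only difference is cosmetic: the paper cites \cite[\S 3.4]{chla} for the epimorphism $\varphi$ and the identity $\tau\circ\varphi=\Jtrs$, whereas you verify these directly from the defining relations and the uniqueness of the trace, which is a legitimate unpacking of that citation.
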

 
\begin{proof}
Let $\alpha = t_1^k$ and $\beta=t_1^l$ for $k,l \in \Z/d\Z$. If $\Jtrs(\alpha \# \beta) = \Jtrs(\alpha)\Jtrs(\beta)$ for all $\alpha,\beta$, then 
$\Jtrs(t_1^{k+l}) = \Jtrs(t_1^k)\Jtrs(t_1^l)$, and so ${\rm x}_{k+l}={\rm x}_k {\rm x}_l$. We deduce that ${\rm x}_1^d={\rm x}_d={\rm x}_0=1$ and ${\rm x}_k={\rm x}_1^k$  for all $k=1,\ldots,d-1$.

Suppose now that  ${\rm x}_1^d=1$ and ${\rm x}_k={\rm x}_1^k$ for all $k=1,\ldots,d-1$. In this case, as we have shown in \cite[\S 3.4]{chla}, the map
$$
\begin{array}{rccc}
\varphi :   \ {\rm Y}_{d,n}(q) & \longrightarrow & {\rm H}_n(q) & \\
 g_i  &  \mapsto & G_i &  \\
 t_j^k   & \mapsto & {\rm x}_k &(1 \leq k \leq d-1)
\end{array}
$$ 
is a $\C$-algebra epimorphism, and we have
\begin{equation*}
 \tau \circ \varphi = \Jtrs .
\end{equation*}

Let $\alpha=t_1^{k_1}\ldots t_n^{k_n}\,\wt{\a}$, with $\wt{\a} \in B_n$, and $\beta = t_1^{l_1}\ldots t_m^{l_m}\,\wt{\b}$, with $\wt{\b} \in B_m$.
We have $\a \# \b = \a^{[0]} \b^{[n-1]}$, where $\a^{[0]} = t_1^{k_1}\ldots t_n^{k_n}\,\wt{\a}^{[0]}$, $\b^{[n-1]} = t_n^{l_1}\ldots t_{n+m-1}^{l_{m}}\,\wt{\b}^{[n-1]}$
and $\wt{\a}^{[0]}\wt{\b}^{[n-1]} = \wt{\a}\# \wt{\b}$ in the classical braid group $B_{n+m-1}$. 
Then $$\begin{array}{rcl}
\Jtrs ( \a^{[0]} \b^{[n-1]}) & = &  (\tau \circ \varphi) (\a^{[0]} \b^{[n-1]}) \\
&= & \tau({\rm x}_{k_1}\ldots {\rm x}_{k_n}\wt{\a}^{[0]}{\rm x}_{l_1}\ldots {\rm x}_{l_{m}}\wt{\b}^{[n-1]})\\
& =& {\rm x}_{k_1}\ldots {\rm x}_{k_n}{\rm x}_{l_1}\ldots {\rm x}_{l_{m}} \tau(\wt{\a}^{[0]}\wt{\b}^{[n-1]})\\
& = &{\rm x}_{k_1}\ldots {\rm x}_{k_n}{\rm x}_{l_1}\ldots {\rm x}_{l_{m}} \tau(\wt{\a})\tau(\wt{\b})  \\
& =  & \tau({\rm x}_{k_1}\ldots {\rm x}_{k_n}\wt{\a})\tau({\rm x}_{l_1}\ldots {\rm x}_{l_m} \wt{\b}) \\
& = &( \tau \circ \varphi)(\alpha) (\tau \circ \varphi)(\beta),
\end{array} $$
because $\tau$ has the multiplicative property on connected sums.
\end{proof}

\subsection{Mirror images} Let us consider the group automorphism of $B_n$ given by $\s_i \mapsto \s_i^{-1}$. For $\alpha \in B_n$, we denote by $\alpha^*$ the image of $\alpha$ via this automorphism. We call $\alpha^*$ the \emph{mirror image} of $\alpha$.
On the level of closed braids this operation corresponds to switching all crossings. 

The following result is known as the ``mirroring property'' in the case of the Ocneanu trace. Again, due to \eqref{nofactor}, the trace  $\Jtr$  does not satisfy this property, but the specialized trace $\Jtrs$ does.

Before formulating our results, we observe that $\Jtrs(g_i^{-1}) = z - (q-q^{-1})E_D$ and we introduce a new variable $\lambda_D$ in place of $z$. Namely, by re-scaling $\s_i$ to $\sqrt{\lambda_D} g_i$, so that $\Jtrs(g_n^{-1}) =\lambda_D z$, we find
\begin{equation}\label{Dlambda}
\lambda_D := \frac{z - (q-q^{-1})E_D}{z}.
\end{equation}
If we solve \eqref{Dlambda} with respect to the variable $z$, we obtain
$$
z= \frac{(q-q^{-1})\,E_D}{1-\lambda_D}.
$$
Hence, the trace $\Jtrs$ can be considered as a polynomial in the variables $(q,z)$ or the variables $(q,\lambda_D)$ by the above change of variables.

\begin{prop}\label{mirror_trace}
Let $\a \in B_n$.
Then
$$
\Jtrs(q,z) (\a^*)= \Jtrs\left(q^{-1},z - (q-q^{-1})E_D\right)(\a).
$$
or equivalently,
$$
\Jtrs (q,\lambda_D) (\a^*)=\Jtrs\left(q^{-1}, \lambda_D^{-1} \right)(\a) .
$$
\end{prop}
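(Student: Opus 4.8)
The plan is to reduce the statement about the mirror image $\a^*$ to a statement about inverses of the braid generators, and then track how the two trace parameters $q$ and $z$ must transform so that the rescaled trace of an inverse matches the trace of the original generator. First I would observe that since $\a^* $ is obtained from $\a$ by the substitution $\s_i \mapsto \s_i^{-1}$, on the algebra level we have $g_i \mapsto g_i^{-1}$, and by the defining relation \eqref{invrs} we may write $g_i^{-1} = g_i - (q-q^{-1})\,e_i$. The key elementary computation is the one already performed just before the statement: $\Jtrs(g_i^{-1}) = z - (q-q^{-1})E_D$. This suggests introducing the map $\psi$ that sends $q \mapsto q^{-1}$ and $z \mapsto z - (q-q^{-1})E_D$, and verifying that $\Jtrs(q,z)(\a^*) = (\psi\,\Jtrs)(q,z)(\a)$ generator by generator.

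The main step is to set up a comparison between two linear functionals on $\bigcup_n \YH(q)$ and show they agree on a spanning set, or equivalently to proceed by induction on the number of crossings (the length of $\a$ in the generators $\s_i^{\pm 1}$) using the inductive basis. For the base case $\a = 1$ both sides give $1$. For the inductive step I would write $\a = \a' \s_{i}^{\pm 1}$ and use the trace rules (1) and (3), together with the quadratic relation \eqref{quadr} and the inverse formula \eqref{invrs}, to express $\Jtrs(\a' g_i^{\pm 1})$ in terms of $\Jtrs(\a')$ and $\Jtrs(\a' e_i)$. The crucial identity that must hold is the behaviour of $\Jtrs$ under $q \mapsto q^{-1}$ on the quadratic relation: since $q - q^{-1}$ changes sign under $q \mapsto q^{-1}$, the relation $g_i^2 = 1 + (q-q^{-1})e_i g_i$ transforms consistently with the replacement of $g_i$ by $g_i^{-1}$, and one checks that the idempotents $e_i$ (defined in \eqref{ei} purely through the $t_j$'s) are unchanged and hence $E_D = \Jtrs(e_i) = 1/|D|$ is invariant under the substitution. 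Because $\a \in B_n$ has all framings zero, no $t_j$-contributions with $q$-dependence intervene, so the E--condition is exactly what guarantees that $\Jtrs(\a' e_i)$ factors as $E_D\,\Jtrs(\a')$ and the recursion closes cleanly.

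Finally I would translate the result into the $\lambda_D$ variables. Under the substitution $q \mapsto q^{-1}$ and $z \mapsto z - (q-q^{-1})E_D$, a direct computation using the definition \eqref{Dlambda} of $\lambda_D = \bigl(z-(q-q^{-1})E_D\bigr)/z$ shows that $\lambda_D \mapsto \lambda_D^{-1}$; this is the content of the rescaling by $\sqrt{\lambda_D}$ that was arranged precisely so that $\Jtrs(g_n^{-1}) = \lambda_D z$. The equivalence of the two displayed formulas then follows formally.

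The hard part will be the bookkeeping in the inductive step: one must carefully handle the appearance of the term $\Jtrs(\a' e_i)$ and confirm that, for classical braids, the E--condition forces it to collapse to $E_D\,\Jtrs(\a')$ so that the sign flip in $q-q^{-1}$ is absorbed exactly by the shift $z \mapsto z-(q-q^{-1})E_D$. This is where the hypothesis $\a \in B_n$ (rather than $\a \in \mathcal{F}_n$) and the use of the specialized trace $\Jtrs$ rather than $\Jtr$ are essential, mirroring the role of the E--condition in Proposition~\ref{connsum}.
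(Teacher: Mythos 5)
Your opening moves coincide with the paper's (the substitution $q \mapsto q^{-1}$, $z \mapsto z - (q-q^{-1})E_D$ motivated by $\Jtrs(g_i^{-1}) = z-(q-q^{-1})E_D$, and your final $\lambda_D$ computation is exactly the one in the paper), but the core of your argument --- the induction on word length --- has a genuine gap. Your inductive step rests on the claim that the E--condition forces $\Jtrs(\a' e_i) = E_D\,\Jtrs(\a')$ for an arbitrary word $\a'$ and arbitrary index $i$. That is not what the E--system gives: it guarantees factorization only for the \emph{top} idempotent against elements of the smaller algebra, i.e.\ $\Jtrs(\b\, e_n) = E_D\,\Jtrs(\b)$ for $\b \in \YH(q) \subset {\rm Y}_{d,n+1}(q)$. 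For interior indices it fails already in the smallest case: from $t_2^{-s}g_1 = g_1 t_1^{-s}$ one gets $e_1 g_1 = \frac{1}{d}\sum_{s} t_1^s g_1 t_1^{-s}$, so by conjugation invariance $\Jtrs(g_1 e_1) = z$, whereas $E_D\,\Jtrs(g_1) = E_D\, z$; these differ whenever $\vert D \vert > 1$. So in your step $\a = \a'\s_i^{\pm 1}$ the term $\Jtrs(\a' e_i)$ does not collapse and the recursion does not close. Indeed, if your factorization held for all words, then $\Jtrs$ would satisfy the Homflypt skein recursion in the variables $(q, z/E_D)$ and $\Theta_{d,D}$ would coincide with $P(q,z/E_D)$ on all links, contradicting the explicit computation $\Jtrs(\s_1^{2k}) = 1 - E_D + E_D\,\tau(\s_1^{2k})$ in \S\ref{links}.

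The paper's proof sidesteps induction entirely by a presentation-symmetry argument, and this is the idea your sketch is missing: setting $\check{g}_i := g_i^{-1}$ and $\check{q} := q^{-1}$, the elements $\check{g}_i, t_j$ satisfy the same defining relations of $\YH(q)$ (in particular $\check{g}_i^2 = 1 + (\check{q} - \check{q}^{-1})\, e_i\, \check{g}_i$), so by the uniqueness of the Markov trace (Theorem \ref{trace}) the trace computed in the new presentation with Markov parameter $\check{z}$ is the same polynomial as $\Jtrs(\check{q}, \check{z})$; the Markov rule pins down $\check{z} = \Jtrs(g_n^{-1}) = z - (q-q^{-1})E_D$, and this is the only place the E--condition is used --- legitimately, via \eqref{nofactor} with the top idempotent $e_n$ against $\YH(q)$. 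Since evaluating the trace of $\a$ in the checked presentation is literally the same computation as evaluating the trace of $\a^*$ in the original one, the first display follows at once, and your $\lambda_D$ computation (which is correct and identical to the paper's) gives the second. Any repair of your inductive bookkeeping would in effect reconstruct this argument, so I recommend replacing the induction by the change-of-presentation plus uniqueness-of-trace step.
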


\begin{proof}
Set $\check{g}_i:=g_i^{-1}$, for all $i=1,\ldots,n-1$, and $\check{q} := q^{-1}$. It is easy to chek that the algebra $\YH(q)$ is generated by the elements $\check{g}_1, \ldots, \check{g}_{n-1}, t_1,\ldots,t_n$, satisfying relations (\ref{framedgp}) and (\ref{modular}), together with the quadratic relations:
\begin{center}
$\check{g}_i^2 = g_i^{-2} = 1 - (q-q^{-1})e_i g_i^{-1} = 1 + (\check{q}-\check{q}^{-1})e_i \check{g}_i$.
\end{center}
Hence, the quadratic relation does not change under the new presentation. 

Using the new presentation, we can now define a specialized trace $\check{\rm tr}_{d,D}(\check{q}, \check{z})$ on $\YH(q)$ with parameter $\check{z}$.  We have $\check{\rm tr}_{d,D} (\check{q}, \check{z})= \Jtrs(\check{q}, \check{z})$ if and only if 
$ \check{z} =  \Jtrs(g_i^{-1}) = z-(q-q^{-1})E_D$.

It is now easily seen that calculating the trace of a word $\a$ under the new presentation (in variables $(\check{q}, \check{z})$) corresponds to calculating the trace of the word $\a^*$ with the original presentation (in variables $(q,z)$), since the two traces will be the same polynomials (in different variables). We deduce that
$$\Jtrs(q,z)(\a^*) =  \Jtrs(\check{q}, \check{z})(\a)=\Jtrs\left(q^{-1},z - (q-q^{-1})E_D\right)(\a) .$$

Applying the transformations $q\mapsto q^{-1}$ and $z \mapsto (z - (q-q^{-1})E_D)$ on $\lambda_D$, we obtain:
$$
\check{\lambda}_D = \frac{\check{z} - (\check{q}-\check{q}^{-1})E_D}{\check{z}} = 
\frac{z - (q-q^{-1})E_D - (q^{-1}-q)E_D}{z - (q-q^{-1})E_D} = \frac{z}{z - (q-q^{-1})E_D} = \lambda_D^{-1},
$$
which proves the result for the variables $(q,\lambda_D)$.
\end{proof}

\section{Link invariants from the Yokonuma--Hecke algebras}\label{yhinvts}
Given a solution $X_D := ({\rm x}_1, \ldots , {\rm x}_{d-1})$ of the {\rm E}--system, then from $\Jtrs$ invariants for various types of knots and links, such as framed, classical and singular, have been constructed in \cite{jula2,jula3,jula4}. We shall now adapt the definition of these invariants in view of the new presentation of $\YH(q)$ used in this paper. Then we shall define an invariant for transverse knots.

\subsection{Framed links} 
Let ${\mathcal L}_f$ denote the set of oriented  framed links. We set:
\begin{equation}\label{CapitalLambda}
\Lambda_D:=\frac{1}{z \sqrt{\lambda_D}}.
\end{equation}
From the above and re-scaling $\s_i$ to $\sqrt{\lambda_D} g_i$, so that $\Jtrs(g_n^{-1}) =\lambda_D z$, we have the following Theorem, which is analogous to \cite[Theorem 8]{jula2}:

\begin{thm}\label{framedinv} 
 Given a solution $X_D$ of the {\rm E}--system, for any framed braid $\a \in {\mathcal  F}_{n}$ we define for the framed link $\widehat{\a} \in {\mathcal L}_f$:
$$
\Phi_{d,D} (\widehat{\a}) = \Lambda_D^{n-1} (\sqrt{\lambda_D})^{\epsilon(\a)} \left(\Jtrs \circ \gamma \right)(\a)
$$
where $\gamma : \C {\mathcal  F}_{n}  \longrightarrow  \YH(q)$ is the natural algebra homomorphism defined via: $\s_i \mapsto g_i$ and $t_j^s  \mapsto t_j^{s( {\rm mod}\, d)}$,  and $\epsilon(\a)$ is the algebraic sum of the exponents of the $\s_i$'s in $\a$.
 Then the map  $\Phi_{d,D} (q,z)$  is a 2-variable isotopy invariant of oriented framed links.
\end{thm}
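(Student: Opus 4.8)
The plan is to verify that the quantity $\Phi_{d,D}(\widehat{\a})$ is invariant under the moves generating framed braid equivalence, namely conjugation in ${\mathcal F}_n$ (Markov move I) together with positive and negative stabilization $\a \leftrightarrow \a\s_n^{\pm 1}$ (Markov move II). Since framed links up to isotopy are in bijection with framed braids up to this equivalence, invariance under these moves is exactly what is needed. I would first dispose of the well-definedness on the algebra level: the homomorphism $\gamma$ sends the quadratic relation in ${\mathcal F}_{d,n}$ to \eqref{quadr} by construction, so $(\Jtrs\circ\gamma)(\a)$ depends only on the image of $\a$ in $\YH(q)$.

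For \textbf{conjugation invariance} I would argue that if $\a = \beta\g\beta^{-1}$ in ${\mathcal F}_n$, then $\epsilon(\a)=\epsilon(\g)$ since the exponent sum is a conjugation invariant, and the number of strands $n$ is unchanged; meanwhile $(\Jtrs\circ\gamma)(\a) = (\Jtrs\circ\gamma)(\g)$ follows directly from trace property (1) of $\Jtrs$ applied in $\YH(q)$. Thus every factor in the definition of $\Phi_{d,D}$ is preserved. For \textbf{positive stabilization} $\a\mapsto\a\s_n$ (with $\a\in{\mathcal F}_n$, so $\a\s_n\in{\mathcal F}_{n+1}$), I would compute each factor: the strand count goes $n\mapsto n+1$, contributing an extra $\Lambda_D$; the exponent sum goes $\epsilon(\a)\mapsto\epsilon(\a)+1$, contributing an extra $\sqrt{\lambda_D}$; and by Markov property (3), $(\Jtrs\circ\gamma)(\a g_n) = z\,(\Jtrs\circ\gamma)(\a)$. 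Using $\Lambda_D = 1/(z\sqrt{\lambda_D})$ from \eqref{CapitalLambda}, the three new factors combine as $\Lambda_D\cdot\sqrt{\lambda_D}\cdot z = 1$, so $\Phi_{d,D}(\widehat{\a\s_n}) = \Phi_{d,D}(\widehat{\a})$.

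The main obstacle, and the step that genuinely needs the E--condition, is \textbf{negative stabilization} $\a\mapsto\a\s_n^{-1}$. Here the subtlety is that $\Jtrs(\a g_n^{-1})$ must factor through $\Jtrs(\a)$, and by \eqref{invrs} we have $\Jtrs(\a g_n^{-1}) = \Jtrs(\a g_n) - (q-q^{-1})\Jtrs(\a e_n)$. The first term is $z\,\Jtrs(\a)$ by Markov property, but the second term factors as $\Jtrs(\a e_n) = E_D\,\Jtrs(\a)$ \emph{only because} $X_D$ solves the E--system, as recorded in \S\ref{strace} where $\Jtrs(e_i)=E_D=1/|D|$. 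This is precisely the point at which passing from $\Jtr$ to the specialized trace $\Jtrs$ is indispensable (compare \eqref{nofactor} and \eqref{aen}). Granting this, $\Jtrs(\a g_n^{-1}) = \bigl(z-(q-q^{-1})E_D\bigr)\Jtrs(\a) = \lambda_D\, z\,\Jtrs(\a)$ by the definition \eqref{Dlambda} of $\lambda_D$. I would then track the factors as in the positive case: the extra $\Lambda_D$ from the strand count, the factor $(\sqrt{\lambda_D})^{-1}$ from $\epsilon(\a)\mapsto\epsilon(\a)-1$, and the factor $\lambda_D z$ from the trace, whose product is $\Lambda_D\cdot(\sqrt{\lambda_D})^{-1}\cdot\lambda_D z = \sqrt{\lambda_D}$.

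Finally I would reconcile the bookkeeping: the point of the re-scaling $\s_i\mapsto\sqrt{\lambda_D}\,g_i$ built into $\Phi_{d,D}$ via the factor $(\sqrt{\lambda_D})^{\epsilon(\a)}$ is exactly to absorb this asymmetry, so that positive and negative stabilization both leave $\Phi_{d,D}$ unchanged; I would present the two computations together and check that the chosen normalization $\Lambda_D$ makes both collapse to the identity simultaneously. One should also note that $\Phi_{d,D}(\widehat{\a})=1$ on the unknot (the closure of the identity braid in ${\mathcal F}_1$, where $n=1$, $\epsilon=0$, and the trace is $1$ by property (2)), confirming the normalization is consistent. Since these are all the generating moves of framed braid equivalence, invariance under each yields that $\Phi_{d,D}$ is a well-defined $2$-variable invariant of oriented framed links, completing the proof along the lines of \cite[Theorem 8]{jula2}.
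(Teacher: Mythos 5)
Your proof follows exactly the route the paper intends: the paper states Theorem~\ref{framedinv} without a written proof, deferring to the analogous \cite[Theorem 8]{jula2}, and that argument is precisely your verification of invariance under the framed Markov moves --- conjugation via trace rule (1), positive stabilization via rule (3) together with the normalization $\Lambda_D\cdot\sqrt{\lambda_D}\cdot z=1$, and negative stabilization via the E--condition, which gives $\Jtrs(\a e_n)=E_D\,\Jtrs(\a)$ and hence $\Jtrs(\a g_n^{-1})=\lambda_D z\,\Jtrs(\a)$ by \eqref{Dlambda}. One arithmetic slip to fix: in the negative-stabilization bookkeeping you write $\Lambda_D\cdot(\sqrt{\lambda_D})^{-1}\cdot\lambda_D z=\sqrt{\lambda_D}$, whereas in fact
\begin{equation*}
\Lambda_D\cdot(\sqrt{\lambda_D})^{-1}\cdot\lambda_D z \;=\; \frac{1}{z\sqrt{\lambda_D}}\cdot\frac{1}{\sqrt{\lambda_D}}\cdot\lambda_D z \;=\; 1,
\end{equation*}
which is exactly the value invariance requires, so the conclusion you assert immediately afterwards (``both collapse to the identity'') is correct and the proof stands once this typo is corrected.
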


\begin{prop}The invariant $\Phi_{d,D}$ satisfies the following skein relation:
\begin{equation*}
\frac{1}{\sqrt{\lambda_D}} \Phi_{d,D}(L_+) - \sqrt{\lambda_D} \Phi_{d,D}(L_-) = \frac{q-q^{-1}}{d} \, \sum_{s=0}^{d-1} \Phi_{d,D}(L_s),
\end{equation*}
where the links $L_+$, $L_-$ and $L_s$ are closures of the framed braids illustrated in Figure~\ref{framed_skein}.
\end{prop}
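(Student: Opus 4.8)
The plan is to obtain the skein relation from the single algebraic identity
\[
g_i - g_i^{-1} = (q-q^{-1})\,e_i,
\]
which is just a rearrangement of \eqref{invrs}, combined with the expansion \eqref{ei} of $e_i$ and the linearity of the specialized trace $\Jtrs$. The starting point is to reduce the geometric skein triple to a purely algebraic statement. Since $\Phi_{d,D}$ is a genuine invariant of framed links (Theorem~\ref{framedinv}), I may represent the three links by braids that agree outside the local disk: after applying Alexander's braiding theorem and moving the relevant crossing into the position of a single generator by conjugation (which $\Jtrs$ respects by rule~(1)), there is a common framed braid word $w \in {\mathcal F}_n$ and an index $i$ with $L_+ = \widehat{w\,\s_i}$, $L_- = \widehat{w\,\s_i^{-1}}$, and $L_s = \widehat{w\, t_i^s t_{i+1}^{-s}}$, the last being the framed link obtained by replacing the crossing with the trivial tangle carrying framings $s$ and $-s$ on the two strands.

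Next I would transport the identity to the trace. Multiplying the rearranged quadratic relation on the left by $\gamma(w)$ and substituting \eqref{ei} gives
\[
\gamma(w)\,g_i - \gamma(w)\,g_i^{-1} = \frac{q-q^{-1}}{d}\sum_{s=0}^{d-1}\gamma(w)\,t_i^s t_{i+1}^{-s},
\]
and applying $\Jtrs$ together with its linearity yields the corresponding relation among the three trace values $\Jtrs(\gamma(w)g_i)$, $\Jtrs(\gamma(w)g_i^{-1})$ and $\Jtrs(\gamma(w)t_i^s t_{i+1}^{-s})$. It then remains to convert these trace values into the invariants $\Phi_{d,D}(L_\pm)$ and $\Phi_{d,D}(L_s)$ via the defining formula $\Phi_{d,D}(\widehat{\a}) = \Lambda_D^{n-1}(\sqrt{\lambda_D})^{\epsilon(\a)}(\Jtrs\circ\gamma)(\a)$.

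The bookkeeping is the routine part: the three words share the same number of strands $n$ and the same underlying $w$, while their exponent sums are $\epsilon(w)+1$, $\epsilon(w)-1$ and $\epsilon(w)$ respectively, since the framing generators $t_j$ do not contribute to $\epsilon$. Solving the defining formula for each trace value, substituting into the algebraic relation, and cancelling the common factor $\Lambda_D^{n-1}(\sqrt{\lambda_D})^{\epsilon(w)}$ leaves precisely the factor $(\sqrt{\lambda_D})^{-1}$ in front of $\Phi_{d,D}(L_+)$ and $\sqrt{\lambda_D}$ in front of $\Phi_{d,D}(L_-)$, giving the asserted relation. The one step that genuinely needs care is the localization in the first paragraph, i.e.\ justifying that every skein triple can be brought simultaneously into this braided normal form with the distinguished crossing realized by a single generator; once this is granted, the rest is the identity \eqref{invrs} read through the trace and the normalization. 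This mirrors the derivation of \cite[Proposition~7]{jula2} for the old presentation.
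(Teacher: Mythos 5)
Your proposal is correct and follows essentially the same route as the paper's proof: Jones' method via the Alexander theorem to write $L_+=\widehat{\b\,\s_i}$ with the distinguished crossing a single generator, then the inverse relation \eqref{invrs} expanded through \eqref{ei}, linearity of $\Jtrs$, and the exponent-sum bookkeeping $\epsilon(\b\s_i^{\pm 1})=\epsilon(\b)\pm 1$, $\epsilon(\b t_i^s t_{i+1}^{d-s})=\epsilon(\b)$. The only cosmetic difference is that you write the framings as $s$ and $-s$ where the paper uses $s$ and $d-s$, which agree under $\gamma$ since framings are taken modulo $d$.
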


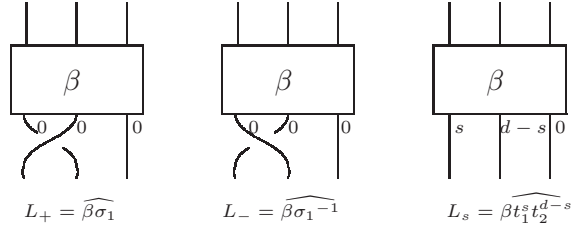
\begin{figure}[H]
\begin{center}
\begin{picture}(216,80)
\put(20,53){$\beta$}

\qbezier(6,70)(6,78)(6,86) 
\qbezier(25,70)(25,78)(25,86)
\qbezier(44,70)(44,78)(44,86)

\qbezier(5,20)(4,22)(5,24) 
\qbezier(25,20)(26,22)(25,24) 

\qbezier(0,44)(0,59)(0,70)
\qbezier(50,44)(50,59)(50,70)

\qbezier(0,70)(25,70)(50,70)
\qbezier(0,44)(25,44)(50,44)

\qbezier(5,24)(5,29)(15,34)
\qbezier(15,34)(25,39)(25,44)
\qbezier(25,24)(25,28)(20,31)
\qbezier(10,37)(5,40)(5,44)

\qbezier(44,20)(44,32)(44,44)

\put(100,53){$\beta$}

\qbezier(86,70)(86,78)(86,86) 
\qbezier(105,70)(105,78)(105,86)
\qbezier(124,70)(124,78)(124,86)

\qbezier(85,20)(84,22)(85,24) 
\qbezier(105,20)(106,22)(105,24) 

\qbezier(80,44)(80,59)(80,70)
\qbezier(130,44)(130,59)(130,70)

\qbezier(80,70)(105,70)(130,70)
\qbezier(80,44)(105,44)(130,44)

\qbezier(85,24)(85,28)(90,31)
\qbezier(100,37)(105,40)(105,44)
\qbezier(105,24)(105,29)(95,34)
\qbezier(95,34)(85,39)(85,44)

\qbezier(124,20)(124,32)(124,44)

\put(180,53){$\beta$}

\qbezier(166,70)(166,78)(166,86) 
\qbezier(185,70)(185,78)(185,86)
\qbezier(204,70)(204,78)(204,86)

\qbezier(160,44)(160,59)(160,70)
\qbezier(210,44)(210,59)(210,70)

\qbezier(160,70)(185,70)(210,70)
\qbezier(160,44)(215,44)(210,44)

\qbezier(166,20)(166,32)(166,44) 
\qbezier(185,20)(185,32)(185,44)
\qbezier(204,20)(204,32)(204,44)

\put(10,37){\tiny{$0$}}
\put(25,37){\tiny{$0$}}
\put(46,37){\tiny{$0$}}

\put(90,37){\tiny{$0$}}
\put(105,37){\tiny{$0$}}
\put(125,37){\tiny{$0$}}

\put(168,37){\tiny{$s$}}
\put(185,37){\tiny{$d-s$}}
\put(206,37){\tiny{$0$}}

\put(5,5){\tiny{$L_{+}=\widehat{\beta \sigma_1}$}}
\put(80,5){\tiny{$L_{-}=\widehat{\beta {\sigma_1}^{-1}}$}}
\put(165,5){\tiny{$L_s=\widehat{\beta t_1^st_2^{d-s}}$}}

\end{picture}
\caption{The framed links in the skein relation in open braid form.}
\label{framed_skein}
\end{center}
\end{figure}

\begin{proof}
The proof is after Jones' method \cite{jo}. Namely, by the Alexander theorem for framed links we may assume that $L_+$ is in braided form and that $L_+ = \widehat{\b \s_i}$ for some $\b \in \mathcal{F}_n$ and for some $i$. Then:
$$
L_- = \widehat{\b \s_i^{-1}}, L_s = \widehat{\b t_i^s t_{i+1}^{d-s}} \quad (s=0,\ldots,d-1).
$$
We now apply relation \eqref{invrs}. Since $\epsilon(\b \s_i^{-1})=\epsilon(\b) -1 $, $\epsilon(\b \s_i)=\epsilon(\b) + 1$, $\epsilon(\b t_i^s t_{i+1}^{d-s}) = \epsilon(\b)$, we obtain:
\begin{align*}
\Phi_{d,D}(L_-) = &\Lambda_D^{n-1} \sqrt{\lambda_D}^{\epsilon(\b \s_i^{-1})} \Jtrs(\b g_i^{-1})
\\
\stackrel{\eqref{invrs}}{=} &\Lambda_D^{n-1} \sqrt{\lambda_D}^{\epsilon(\b)-1} \left[ \Jtrs(\b g_i) - (q-q^{-1}) \Jtrs(\b e_i) \right]
\\
= &\Lambda_D^{n-1} \sqrt{\lambda_D}^{\epsilon(\b)} \left[ \frac{1}{\sqrt{\lambda_D}}\Jtrs(\b g_i) - \frac{q-q^{-1}}{\sqrt{\lambda_D}} \Jtrs(\b e_i) \right]
\\
\stackrel{\eqref{ei}}{=} &\frac{1}{\lambda_D} \Lambda_D^{n-1} \sqrt{\lambda_D}^{\epsilon(\b)+1} \Jtrs(\b g_i) -  \frac{q-q^{-1}}{d \sqrt{\lambda_D}} \Lambda_D^{n-1} \sqrt{\lambda_D}^{\epsilon(\b)} \sum_{s=0}^{d-1} \Jtrs(\b t_i^s t_{i+1}^{d-s}) 
\\
= &\frac{1}{\lambda_D} \Phi_{d,D}(L_+) - \frac{q-q^{-1}}{d \sqrt{\lambda_D}} \sum_{s=0}^{d-1} \Phi_{d,D}(L_s).
\end{align*}
\end{proof}

\begin{rem}	
{\rm Note that, for every $d\in \mathbb{N}$, we have $2^d-1$ distinct solutions of the {\rm E}--system, so the above construction yields $2^d -1$ isotopy invariants for framed links.}\end{rem}

\begin{rem} \rm
The behaviour of the trace $\Jtrs$ under inversion, split links and connected sums (when ${\rm x}_1^d=1$ and ${\rm x}_k={\rm x}_1^k$ for all $k=1,\ldots,d-1$) carries through to the invariants $\Phi_{d,D}$. Especially about split links, we have the following multiplicative property of the invariants $\Phi_{d,D}$, which was already known for the Homflypt polynomial.
\end{rem}

\begin{prop}
Let $L = L_1 \sqcup \ldots \sqcup L_m$ be a split link, where $L_1, \ldots, L_m$ are links. Then:
\begin{center}
$\Phi_{d,D}(L) = \Lambda_D^{m-1} \Phi_{d,D}(L_1) \ldots \Phi_{d,D}(L_m).$
\end{center}
\end{prop}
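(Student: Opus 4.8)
The plan is to reduce everything to Proposition~\ref{split_trace} and then to keep careful track of the normalisation factors in the definition of $\Phi_{d,D}$. First I would realise the split link concretely as a closed braid: write each component as $L_i = \widehat{\b_i}$ for some $\b_i \in B_{n_i}$, and place the blocks side by side on disjoint sets of strands to form $\a = \a_1 \cdots \a_m \in B_n$ with $n = n_1 + \cdots + n_m$, where $\a_i$ is $\b_i$ shifted onto the $i$-th block. Since no braiding generator connects two different blocks, the connecting generator between consecutive blocks being simply absent, we have $\widehat{\a} = L_1 \sqcup \cdots \sqcup L_m = L$, so $\a$ is exactly of the form required by Proposition~\ref{split_trace}.

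Next I would apply the definition of $\Phi_{d,D}$ from Theorem~\ref{framedinv} to $\widehat{\a}$, namely $\Phi_{d,D}(L) = \Lambda_D^{n-1}(\sqrt{\lambda_D})^{\epsilon(\a)}\Jtrs(\a)$, and expand its three ingredients across the blocks. The exponent sum is additive, $\epsilon(\a) = \sum_{i=1}^{m}\epsilon(\b_i)$, because juxtaposing and shifting blocks neither creates nor cancels any $\s_i$. For the trace factor, Proposition~\ref{split_trace} gives $\Jtrs(\a) = \prod_{i=1}^m \Jtrs(\a_i)$, and I would then identify $\Jtrs(\a_i) = \Jtrs(\b_i)$ via the index-shift invariance of $\Jtrs$ already invoked in the proof of Proposition~\ref{connsum} (the quadratic relation has coefficients independent of the generator index and $\Jtrs$ factors through stabilisation, so the trivial strands below a block contribute only the factor $1$). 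Combining these, $\Jtrs(\a) = \prod_i \Jtrs(\b_i)$.

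The decisive step is then the accounting of the powers of $\Lambda_D$. Multiplying the defining formulae $\Phi_{d,D}(L_i) = \Lambda_D^{n_i-1}(\sqrt{\lambda_D})^{\epsilon(\b_i)}\Jtrs(\b_i)$ produces the factor $\Lambda_D^{\sum_i (n_i-1)}$, and since $\sum_{i=1}^m (n_i-1) = n - m$ this falls short of the $\Lambda_D^{n-1}$ carried by $\Phi_{d,D}(L)$ by exactly $\Lambda_D^{m-1}$. Putting the three observations together yields
\[
\Phi_{d,D}(L) = \Lambda_D^{n-1}(\sqrt{\lambda_D})^{\epsilon(\a)}\Jtrs(\a) = \Lambda_D^{m-1}\prod_{i=1}^m \Lambda_D^{n_i-1}(\sqrt{\lambda_D})^{\epsilon(\b_i)}\Jtrs(\b_i) = \Lambda_D^{m-1}\,\Phi_{d,D}(L_1)\cdots\Phi_{d,D}(L_m),
\]
as claimed.

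I expect the only genuinely delicate point to be this strand-count bookkeeping: one must make sure that the total braid index equals $n = \sum_i n_i$, with the blocks adjacent and the connecting generator merely absent rather than separated by extra free strands (which would otherwise reappear as spurious split unknot components and disturb the count), so that the surplus normalisation is precisely $\Lambda_D^{m-1}$. This defect factor is the exact analogue of the correction one sees for the Homflypt polynomial on split links, and its appearance is a direct consequence of the normalisation $\Lambda_D$ not being multiplicative under disjoint union.
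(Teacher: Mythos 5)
Your proof is correct and follows essentially the same route as the paper's: realise $L$ as a product of disjoint braid blocks, apply Proposition~\ref{split_trace} to factor $\Jtrs$, use additivity of $\epsilon$, and observe that $\sum_i(n_i-1)=n-m$ accounts for the surplus $\Lambda_D^{m-1}$. Your explicit appeal to the index-shift invariance $\Jtrs(\a_i)=\Jtrs(\b_i)$ (borrowed from the proof of Proposition~\ref{connsum}) is a detail the paper leaves implicit, and it is a welcome clarification rather than a deviation.
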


\begin{proof}
Since $L = L_1 \sqcup \ldots \sqcup L_m$, it can be written in braid form as $\a_1 \ldots \a_m$, where $\widehat{\a_i} = L_i$, for $i=1,\ldots,m$ as in Proposition~\ref{split_trace}. Suppose that each $\a_i$ has $\ell_i$ strands, so that $\ell_1 + \ldots + \ell_m = n$. By Proposition~\ref{split_trace} we have that:
\begin{align*}
\Phi_{d,D}(L) &= \Lambda_D^{n-1} \sqrt{\lambda_D}^{\epsilon(\a_1 \ldots \a_m)} \Jtrs(\a_1 \ldots \a_m)
= \Lambda_D^{\ell_1 + \ldots + \ell_m -1} \sqrt{\lambda_D}^{\epsilon(\a_1 \ldots \a_m)} \Jtrs(\a_1) \ldots \Jtrs(\a_m)\\
&= \Lambda_D^{m-1} \Phi_{d,D}(L_1) \ldots \Phi_{d,D}(L_m),
\end{align*}
which proves the result.
\end{proof}

\subsection{Classical links}
Let  ${\mathcal L}$ denote the set of oriented classical links.  The classical braid group $B_n$ injects into the framed braid group ${\mathcal F}_n$, whereby elements in $B_n$ are viewed as framed braids with all framings zero. So, by the classical Markov braid equivalence, comprising conjugation in the groups $B_n$ and positive and negative stabilizations and destabilizations, and by the construction and notations above, we obtain isotopy invariants for oriented classical knots and links, where the $t_j$'s are treated as formal generators. These invariants of classical links, which are analogous to those defined in  \cite{jula3} where the old presentation for the Yokonuma--Hecke algebra is used, are denoted as $\Theta_{d,D}$ and the restriction of $\gamma : \C {\mathcal  F}_{n}  \longrightarrow  \YH(q)$ on $\C B_n$ is denoted as $\d$. Namely,
$$
\Theta_{d,D} (\widehat{\a}) := \Lambda_D^{n-1} (\sqrt{\lambda_D})^{\epsilon(\a)}\, (\Jtrs \circ \d) (\a).
$$

\smallbreak
The invariants $\Theta_{d,D}(q,z)$ need to be compared with known invariants of classical links, especially with the Homflypt polynomial. The Homflypt polynomial $P (q,z)$ is a 2-variable isotopy invariant of oriented classical links that was constructed from the Iwahori--Hecke algebras ${\rm H}_n(q)$ and the Ocneanu trace $\tau$ after re--scaling and normalizing $\tau$ \cite{jo}. In this paper we define $P$ via the invariants $\Theta_{d,D}$, since for $d=1$ the algebras  ${\rm H}_n(q)$ and ${\rm Y}_{1,n}(q)$ coincide and the traces $\tau$, ${\rm tr}_1$ and ${\rm tr}_{1, \{ 0\}}$ also coincide. Namely, we define:
$$
P(\widehat{\a}) = \Theta_{1,\{0\}}(\widehat{\a}) = \left( \frac{1}{z \sqrt{\lambda_{\rm H}}} \right)^{n-1} (\sqrt{\lambda_{\rm H}})^{\epsilon(\a)} \left({\rm tr}_{1, \{ 0\}} \circ \delta \right)(\a)
$$
where $\lambda_{\rm H} := \frac{z-(q-q^{-1})}{z} = \lambda_{\{0\}}$. Further, recall that the Homflypt polynomial satisfies the following skein relation \cite{jo}:
$$
\frac{1}{\sqrt{\lambda_{\rm H}}} \, P(L_+) - \sqrt{\lambda_{\rm H}} \, P(L_-) = (q-q^{-1}) \, P(L_0)
$$
where $L_+, L_-, L_0$ is a Conway triple.

\smallbreak
 Contrary to the case of framed links, the skein relation of the invariants $\Phi_{d,D}(q,z)$ has no topological interpretation in the case of classical links, since it introduces framings.  This makes it very difficult to compare the invariants  $\Theta_{d,D} (q,z)$ with the Homflypt polynomial using diagrammatic methods. 
On the algebraic level, there are no algebra homomorphisms connecting the algebras and the traces \cite{chla}.  Consequently, in \cite{chla} it is shown that for generic values of the parameters $q,z$ the invariants $\Theta_{d,D}(q,z)$ do not coincide with the Homflypt polynomial. They only coincide in  the trivial cases where $q=\pm 1$ or $\Jtr(e_i)=1$. The last case implies that the solution of the {\rm E}--system comprises the $d$-th roots of unity.

\smallbreak
Yet, our computational data below indicate that these invariants do not distinguish more or less knot pairs than the Homflypt polynomial. So, the invariants $\Theta_{d,D}(q,z)$ need to be further investigated, as they may be topologically equivalent to the Homflypt polynomial $P(q,z)$. 

\begin{rem} \rm
The behaviour of the traces $\Jtrs$ under inversion, split links, connected sums and mirror imaging carries through to the invariants $\Theta_{d,D}$. Especially about mirror images, we have the following mirroring property of the invariants $\Theta_{d,D}$, which is already known to be valid for the Homflypt polynomial.
\end{rem}
\begin{prop}\label{mirror}
Given a solution $X_D$ of the {\rm E}--system, for any braid $\a \in B_{n}$, we have 
$$
\Theta_{d,D}(q,z) (\widehat{\a^*}) = \Theta_{d,D}  \left({q}^{-1},z - (q-q^{-1})E_D\right)(\widehat{\a}),
$$
or, equivalently, in a way analogous to  the Homflypt polynomial,
$$
\Theta_{d,D}(q,\lambda_D)  (\widehat{\a^*})= \Theta_{d,D}  \left(q^{-1}, \lambda_D^{-1}  \right) (\widehat{\a}).
$$
\end{prop}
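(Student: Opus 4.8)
The plan is to derive Proposition~\ref{mirror} directly from Proposition~\ref{mirror_trace}, which already establishes the mirroring property at the level of the specialized trace $\Jtrs$. The key observation is that the invariant $\Theta_{d,D}$ is built from $\Jtrs$ via a normalization factor $\Lambda_D^{n-1}$ and a re-scaling factor $(\sqrt{\lambda_D})^{\epsilon(\a)}$, so I need to track how each of these three ingredients transforms under the substitution $q \mapsto q^{-1}$, $z \mapsto z - (q-q^{-1})E_D$ (equivalently $\lambda_D \mapsto \lambda_D^{-1}$).

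First I would recall from the definition that
$$
\Theta_{d,D}(q,z)(\widehat{\a^*}) = \Lambda_D^{n-1} (\sqrt{\lambda_D})^{\epsilon(\a^*)} (\Jtrs \circ \d)(\a^*).
$$
The mirror image reverses every crossing sign, so $\epsilon(\a^*) = -\epsilon(\a)$; this is the crucial combinatorial input. For the trace factor I would invoke Proposition~\ref{mirror_trace} to replace $(\Jtrs \circ \d)(\a^*)$ evaluated at $(q,\lambda_D)$ by $(\Jtrs \circ \d)(\a)$ evaluated at $(q^{-1}, \lambda_D^{-1})$. The remaining task is purely bookkeeping: under $\lambda_D \mapsto \lambda_D^{-1}$ the re-scaling factor becomes $(\sqrt{\lambda_D})^{-\epsilon(\a)} = (\sqrt{\lambda_D^{-1}})^{\epsilon(\a)}$, which is exactly the re-scaling factor appropriate to the variable $\lambda_D^{-1}$, so the sign flip in $\epsilon$ and the inversion in $\lambda_D$ conspire to cancel. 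I would verify in parallel that the normalization factor $\Lambda_D^{n-1} = (z\sqrt{\lambda_D})^{-(n-1)}$ transforms consistently: since $n$ is unchanged by mirroring, one checks that $\Lambda_D$ expressed in the new variables matches the $\Lambda_D$ attached to the invariant on the right-hand side.

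The mildest obstacle, and the one I would be most careful about, is confirming that the variable change in the normalization factor $\Lambda_D$ is compatible. Writing $\Lambda_D = 1/(z\sqrt{\lambda_D})$, one must check that substituting $z \mapsto z-(q-q^{-1})E_D$ and $\lambda_D \mapsto \lambda_D^{-1}$ reproduces the correct $\check\Lambda_D$; here the identity $z\cdot\lambda_D = \check z = z - (q-q^{-1})E_D$ from the definition of $\lambda_D$ in~\eqref{Dlambda} does the work, since $\check z \sqrt{\check\lambda_D} = \check z/\sqrt{\lambda_D} = z\sqrt{\lambda_D}$. Once this compatibility is in hand, both formulations follow at once, and the $(q,z)$ version is obtained from the $(q,\lambda_D)$ version simply by re-expressing $\lambda_D^{-1}$ in terms of $z - (q-q^{-1})E_D$ through~\eqref{Dlambda}. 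I expect the entire argument to be short, the only genuine content being Proposition~\ref{mirror_trace} together with the identity $\epsilon(\a^*) = -\epsilon(\a)$.
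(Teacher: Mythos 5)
Your proposal is correct and follows essentially the same route as the paper's proof: apply Proposition~\ref{mirror_trace} to the trace factor, use $\epsilon(\a^*)=-\epsilon(\a)$ to absorb the sign into $(\sqrt{\lambda_D})^{\epsilon}$, and check that $\Lambda_D$ is invariant under $q\mapsto q^{-1}$, $\lambda_D\mapsto\lambda_D^{-1}$. Your explicit verification that $\check z\sqrt{\check\lambda_D}=z\sqrt{\lambda_D}$ via $z\lambda_D=z-(q-q^{-1})E_D$ correctly fills in the step the paper dismisses as ``easy to check.''
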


\begin{proof}
It is easy to check that the quantity $\Lambda_D$ is invariant under the transformation 
\begin{center}
$q \mapsto {q}^{-1}$ \,\,\,\,\,and\,\,\,\,\, $\lambda_D \mapsto{\lambda_D}^{-1}$. 
\end{center}
Now, for any  $\a \in B_{n}$, we have $\epsilon(\a^*) = -\epsilon(\a)$, and so,
\begin{align*}
\Theta_{d,D}(q,\lambda_D)  (\widehat{\a^*})& \,\,\,= \Lambda_D^{n-1} (\sqrt{\lambda_D})^{-\epsilon(\a)} \,\Jtrs(q,\lambda_D) (\a^*)
\\
&= \Lambda_D^{n-1} \left (\sqrt{\frac{1}{\lambda_D}} \right)^{\epsilon(\a)} \Jtrs \left(q^{-1}, \lambda_D^{-1}\right) (\a) = \Theta_{d,D} \left(q^{-1}, \lambda_D^{-1}\right)(\widehat{\a}),
\end{align*}
where for the last equality we used Proposition~\ref{mirror_trace}.
\end{proof}

\subsection{Singular links}

Let ${\mathcal L}_{\mathcal S}$ denote the set of oriented singular links.  Oriented singular links are represented by singular braids, which form the singular braid monoids  ${\mathcal S}B_n$ \cite{ba,bi,sm}.  The singular braid monoid ${\mathcal S}B_n$ is generated by the classical braiding generators $\s_i$ with their inverses, together with the elementary singular braids $\tau_i$, which are not invertible.  In \cite{jula4} a monoid homomorphism was constructed. Here we give another one adapted to the new quadratic relation, namely:
\begin{equation}\label{eta}
\begin{array}{cccl}
\eta : & {\mathcal S} B_n & \longrightarrow & \YH(q) \\
 & \s_i  & \mapsto & g_i \\
 & \tau_i    & \mapsto & e_i
\end{array}
\end{equation}

Using now  the singular braid equivalence \cite{ge}, the map $\eta$ and the specialized  trace $\Jtrs$ we obtain isotopy invariants for oriented singular links, analogous to the ones constructed in \cite[Theorem 3.6]{jula4}, as follows:

\begin{thm}\label{Jula4:Th5}
For any singular braid $\a \in {\mathcal S}B_n$, we define
$$
\Psi_{d,D} (\widehat{\a}) := \Lambda_D^{n-1} (\sqrt{\lambda_D})^{\epsilon(\a)}\, \left(\Jtrs \circ \eta \right) (\a)\ ,
$$
where $\Lambda_D, \lambda_D$ are as defined in \eqref{Dlambda} and \eqref{CapitalLambda}, $\eta$ is as defined in \eqref{eta} and
$\epsilon(\a)$ is the sum of the exponents of the generators $\s_i$ and $\tau_i$ in the word  $\a$.
 Then the map  
 $$\Psi_{d,D}(q,z): {\mathcal L}_{\mathcal{S}} \rightarrow \mathcal{R}_D, \,\,L \mapsto \Psi_{d,D}(L)$$  is a $2$-variable isotopy invariant of oriented singular links.
\end{thm}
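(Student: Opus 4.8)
The plan is to prove that $\Psi_{d,D}$ is a well-defined isotopy invariant of oriented singular links by showing that it is invariant under the moves of the singular braid equivalence of \cite{ge}. By the singular analogue of the Alexander theorem, every oriented singular link is the closure $\widehat{\a}$ of some singular braid $\a \in {\mathcal S}B_n$, so it suffices to check that the scalar $\Psi_{d,D}(\widehat{\a})$ depends only on the isotopy class of $\widehat{\a}$, i.e. that it is unchanged under the generating moves of Markov-type equivalence for singular braids: (i) conjugation $\a \mapsto \b\a\b^{-1}$ within each ${\mathcal S}B_n$, and (ii) positive and negative stabilization $\a \mapsto \a\s_n^{\pm 1}$ from ${\mathcal S}B_n$ into ${\mathcal S}B_{n+1}$.

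First I would verify that $\eta$ is indeed a well-defined monoid homomorphism, which amounts to checking that the images $g_i$ of the $\s_i$ and $e_i$ of the $\tau_i$ satisfy in $\YH(q)$ all the defining relations of ${\mathcal S}B_n$. The braid relations and the far-commutation relations hold among the $g_i$ by \eqref{framedgp}; the relations expressing that the $\tau_i$ commute at distance $>1$ and satisfy the mixed braid-type relations with the $\s_j$ must be matched by the corresponding identities among the $e_i$ and $g_j$, using that $e_i$ is a central idempotent relative to $g_i$ (recall $e_i g_i = g_i e_i$) and the explicit form \eqref{ei}. I expect these to follow from short direct computations in the group algebra, and I would treat this verification as routine given the structure already recorded in the excerpt.

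The heart of the argument is invariance under conjugation and stabilization. Conjugation invariance is immediate: since $\eta$ is a homomorphism, $\eta(\b\a\b^{-1}) = \eta(\b)\eta(\a)\eta(\b)^{-1}$, and the exponent sum $\epsilon$ is a conjugation-invariant integer, so the trace property (1) of $\Jtrs$ (namely $\Jtrs(xy) = \Jtrs(yx)$) gives $\left(\Jtrs\circ\eta\right)(\b\a\b^{-1}) = \left(\Jtrs\circ\eta\right)(\a)$, and the prefactors $\Lambda_D^{n-1}(\sqrt{\lambda_D})^{\epsilon(\a)}$ agree. For stabilization, applying the Markov property (3) and the rescaling identity $\Jtrs(g_n^{-1}) = \lambda_D z$ built into the definition of $\lambda_D$ and $\Lambda_D$, one computes exactly as in the framed case (Theorem~\ref{framedinv}) that both $\Psi_{d,D}(\widehat{\a\s_n})$ and $\Psi_{d,D}(\widehat{\a\s_n^{-1}})$ equal $\Psi_{d,D}(\widehat{\a})$; the bookkeeping of the power of $\Lambda_D$ (absorbing the extra strand) against the factor $\sqrt{\lambda_D}^{\pm 1}$ from the change in $\epsilon$ is precisely what the definition of $\Lambda_D$ in \eqref{CapitalLambda} is engineered to balance.

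The main obstacle I anticipate is not the classical Markov moves, which transfer verbatim from the framed and classical constructions, but rather ensuring that $\eta$ respects the extra relations of the singular braid monoid that have no classical counterpart, especially any relation involving a singular crossing $\tau_i$ interacting with a braiding $\s_i$ on the same pair of strands. Since $e_i$ is a non-invertible idempotent whereas $g_i$ is invertible, I would need to confirm that the defining relations of ${\mathcal S}B_n$ never force a relation that would be violated by the idempotency $e_i^2 = e_i$ or by $e_i g_i = g_i e_i$; this is exactly the point where the choice $\tau_i \mapsto e_i$ (rather than some other element) is essential, and I would verify it by listing the presentation of ${\mathcal S}B_n$ from \cite{ba,bi,sm} and checking each relation against the image. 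Once $\eta$ is confirmed to be a homomorphism and the two Markov moves are checked, the target ring $\mathcal{R}_D$ is the image ring $\C[z]$ (or $\C[\lambda_D^{\pm 1/2}, z]$ after rescaling), and the theorem follows.
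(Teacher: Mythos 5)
Your proposal is correct and takes essentially the same route as the paper, which obtains the theorem exactly this way: verifying that $\eta$ respects the defining relations of ${\mathcal S}B_n$ (citing \cite{jula4}, where the analogous homomorphism was checked for the old presentation), and then applying Gemein's Markov theorem for singular braids together with the conjugation and stabilization properties of $\Jtrs$, with the normalization $\Lambda_D^{n-1}(\sqrt{\lambda_D})^{\epsilon(\a)}$ balancing both moves as in the framed case. Two small clarifications: your anticipated ``main obstacle'' is vacuous, since extra relations satisfied by the images (such as $e_i^2=e_i$) can never obstruct well-definedness of a homomorphism --- only the defining relations of ${\mathcal S}B_n$ must hold among the images, which they do ($e_ig_i=g_ie_i$, $e_ie_j=e_je_i$, $g_ig_{i+1}e_i=e_{i+1}g_ig_{i+1}$, and the far commutations); and for negative stabilization the identity you need is the factorization $\Jtrs\bigl(\eta(\a)\,g_n^{-1}\bigr)=\lambda_D\, z\,\Jtrs\bigl(\eta(\a)\bigr)$ for arbitrary $\a$, which holds not merely because $\Jtrs(g_n^{-1})=\lambda_D z$ but because the specialization to a solution of the {\rm E}--system forces $\Jtrs(\b\, e_n)=E_D\,\Jtrs(\b)$ for all $\b\in\YH(q)$ --- this is precisely the point where the E--condition enters and where the unspecialized trace $\Jtr$ would fail.
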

Moreover, in the image $\eta ({\mathcal S} B_n)$, we have 
\begin{equation}\label{gipi}
 g_i - g_i^{-1} = (q - q^{-1})e_i \quad \text{for all $i=1,\ldots,n-1$,}
\end{equation}
which gives rise to the following skein relation (compare with \cite{jula4}):
\begin{equation*}
\frac{1}{\sqrt{\lambda_D}}\, \Psi_{d,D} (L_{+})   -  \sqrt{\lambda_D}\, \Psi_{d,D} (L_{-}) 
= \frac{q - q^ {-1}}{\sqrt{\lambda_D}}\, \Psi_{d,D} (L_{\times})
\end{equation*}
where  $L_{+}$, $L_{-}$ and $L_{\times}$ are diagrams of three oriented singular links which are identical except for one crossing, where they are 
as in Figure~\ref{skeinsfig}.

\smallbreak
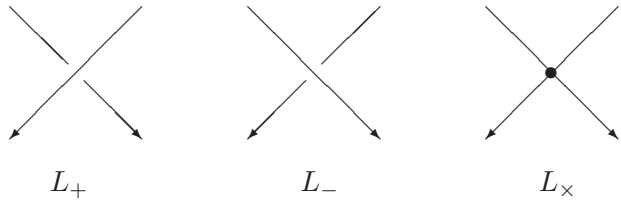
\begin{figure}[H]
\begin{center}
\label{skeinsfig}
\begin{picture}(230,80)
\put(50,80){\vector(-1,-1){50}}
\put(28,52){\vector(1,-1){22}}
\put(0,80){\line(1,-1){22}}

\put(118,58){\line(1,1){22}}
\put(112,52){\vector(-1,-1){22}}
\put(90,80){\vector(1,-1){50}}

\put(230,80){\vector(-1,-1){50}}
\put(180,80){\vector(1,-1){50}}

\put(202,52){$\bullet$}
\put(15,10){$L_+$}
\put(110,10){$L_{-}$}
\put(200,10){$L_{\times}$}
\end{picture}
\caption{The singular links $L_{+}$, $L_{-}$ and $L_{\times}$.}
\end{center}
\end{figure}

\begin{rem} \rm
The behaviour of the traces $\Jtrs$ under inversion, split links, connected sums and mirror imaging carries through to the invariants $\Psi_{d,D}$.
\end{rem}

\begin{rem} \rm The invariants defined in \cite{jula2,jula3,jula4} for framed, classical and singular links respectively, were derived from the old presentation of the Yokonuma--Hecke algebras $\YH(u)$ . These invariants are not necessarily topologically equivalent to the invariants $\Phi_{d,D}$, $\Theta_{d,D}$ and $\Psi_{d,D}$ defined above. See further \cite{chjukala}.
\end{rem}

\subsection{Transverse links}\label{trnsv}

Another class of links which is naturally related to the Yokonuma--Hecke algebras is the class of {\it transverse links}.
A transverse knot is represented by a smooth closed spacial curve which is nowhere tangent to planes of a special field of planes in $\R^3$ called {\it standard contact structure} (for the precise definition, see \cite{FT}).
Transverse links are naturally  framed  and oriented. Transverse equivalence of links consists of isotopy in the class of transverse links.  More precisely, a topological type of framed links is called {\it transversely simple} if any two transverse links within the type are transverse equivalent to each other. Not every topological type is simple; it may consist of several different transverse types. So, two links that are classically isotopic may be transversely non--equivalent (see examples in the next section). 
The problem is to find transverse invariants for such links.

In 1983, D.~Bennequin \cite{Be} noted that the closed braid presentation of knots is convenient for describing transverse knots with the blackboard framing.  For a link $L$ represented as a closed braid $\widehat{\a}$ with $n$ strands, one can check that the self-linking number is equal to $sl(L)=\epsilon(\a)-n$, where $\epsilon(\a)$  is the sum of the exponents of the braiding generators $\s_i$ in the word  $\a \in B_n$ \cite{Be}. So, a transverse knot $K$ represented by the closure of a braid $\a \in B_n$ defines naturally an element of the framed braid group $\a':= t_1^{sl(K)} \a \in {\mathcal F}_{n}$. Equivalently, we can write $\a'= t_1^{r_1}\ldots t_n^{r_n} \a$, where $r_1 + \cdots + r_n = sl(K)$. This generalizes to transverse links in the obvious manner (adding the exponents that correspond to a specific component gives the self-linking of that component).

Further, S.~Orevkov and V.~Shevchishin \cite{OrSh}  and independently N.~Wrinkle \cite{Wr} gave  a transverse analogue of the Markov Theorem,
 comprising conjugation in the braid groups and {\it only positive} stabilizations and destabilizations: $ \alpha \sim \alpha \,{\sigma}_n $, where $\alpha \in B_n$.

 Now, rule (3) of the definition of the trace $\Jtr$ (Theorem \ref{trace}) tells us that $\Jtr$ respects positive stabilizations. Let $L$ be a transverse link represented by the closure $\widehat{\a}$ of a braid $\a \in B_n$, giving rise to the framed braid $\a'= t_1^{r_1}\ldots t_n^{r_n} \a \in {\mathcal F}_n$.  We define 
$$
M_d(\widehat{\alpha}) :=  \frac{1}{z ^{n-1}} \Jtr(\a').
$$

\begin{thm}\label{transverseinv} Let ${\mathcal L}_{\mathcal{T}}$ denote the set of transverse links.
The map $$M_d(q,z, x_1, \ldots , x_{d-1}) : {\mathcal L}_{\mathcal{T}} \rightarrow \C[z^{\pm 1}], \,\,L \mapsto M_d(L)$$
is a $(d+1)$-variable isotopy invariant of oriented transverse links.
\end{thm}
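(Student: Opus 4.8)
The goal is to show that $M_d(\widehat{\alpha})$ is a well-defined invariant of transverse links, i.e.\ that it does not depend on the choice of braid representative $\alpha \in B_n$ of the transverse link $L$. By the transverse Markov theorem of \cite{OrSh,Wr}, two braids $\alpha \in B_n$ and $\alpha' \in B_{n'}$ represent transversely equivalent links if and only if they are related by a finite sequence of (i) conjugation in the braid groups and (ii) \emph{positive} stabilization/destabilization, $\alpha \leftrightarrow \alpha\,\sigma_n$ where $\alpha \in B_n$. So the plan is to verify invariance of $M_d$ under exactly these two moves, using the properties of the Juyumaya trace $\Jtr$ established in Theorem~\ref{trace}.

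First I would treat conjugation. If $\alpha = \beta\gamma$ and $\alpha' = \gamma\beta$ are conjugate in $B_n$, then the associated framed braids have the same self-linking number $sl(L) = \epsilon(\alpha) - n$ (the exponent sum and strand count are conjugation-invariant), so the prefactor $1/z^{n-1}$ is unchanged. The content is then that $\Jtr(\a') $ is unchanged, which follows directly from the conjugation-invariance rule $(1)$ of $\Jtr$, namely $\Jtr(ab) = \Jtr(ba)$, together with the fact that the framing word $t_1^{r_1}\cdots t_n^{r_n}$ can be handled consistently; one must check that applying $\gamma$ and then reordering respects rules $(1)$ and $(4)$.

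Next I would treat positive stabilization, which is the essential step. Here I compare $M_d(\widehat{\alpha})$ for $\alpha \in B_n$ with $M_d(\widehat{\alpha\sigma_n})$ for $\alpha\sigma_n \in B_{n+1}$. Under $\gamma$ the braid $\alpha\sigma_n$ maps to $\gamma(\alpha)g_n$, and the Markov property $(3)$ gives $\Jtr(\gamma(\alpha) g_n) = z\,\Jtr(\gamma(\alpha))$. Meanwhile the strand count increases by one, so the prefactor changes from $1/z^{n-1}$ to $1/z^{n}$; these two factors of $z$ cancel exactly, yielding $M_d(\widehat{\alpha\sigma_n}) = M_d(\widehat{\alpha})$. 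The crucial point to articulate carefully is the bookkeeping of framings: positive stabilization changes $\epsilon$ by $+1$ and the strand number by $+1$, so the self-linking number $sl = \epsilon - n$ is preserved, meaning the framing exponents $r_1, \ldots, r_n$ can be chosen compatibly on both sides, and the new strand receives framing zero. Because rule $(3)$ involves only $g_n$ (with \emph{no} appearance of $g_n^{-1}$, whose trace fails to factor through $\Jtr$ by~\eqref{nofactor}), no E-condition is needed—this is precisely why the bare trace $\Jtr$, rather than the specialized $\Jtrs$, suffices.

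\textbf{The main obstacle.} The genuine subtlety, and the step I expect to require the most care, is showing that the assignment $L \mapsto M_d(\widehat{\alpha})$ is well-defined at the level of the \emph{framing data} before invoking the Markov moves. A transverse link only determines the \emph{total} self-linking number $sl(K)$ per component, not the individual framing exponents $r_1, \ldots, r_n$; the definition writes $\a' = t_1^{r_1}\cdots t_n^{r_n}\alpha$ with $r_1 + \cdots + r_n = sl(K)$. I would therefore need to verify that $\Jtr(\a')$ depends only on the total framing of each component and not on how the framing is distributed among the strands belonging to that component. This reduces, via relations $(f_1)$ and $(f_2)$ of~\eqref{framedgp} and conjugation-invariance of $\Jtr$, to checking that redistributing framings among strands that are braided together in the same component leaves the trace unchanged—an argument that uses rule $(4)$ together with the conjugation rule $(1)$ to slide framing generators along a component. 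Once this independence is secured, together with invariance under conjugation and positive stabilization, the transverse Markov theorem delivers the result that $M_d(q,z,x_1,\ldots,x_{d-1})$ is a well-defined $(d+1)$-variable isotopy invariant of oriented transverse links.
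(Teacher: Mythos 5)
Your proposal is correct and takes essentially the same route as the paper: conjugation invariance comes from rule $(1)$ of $\Jtr$, positive (de)stabilization from the Markov rule $(3)$ with the resulting factor of $z$ cancelling against the change of normalization from $1/z^{n-1}$ to $1/z^{n}$, and the transverse Markov theorem of \cite{OrSh,Wr} then delivers the theorem. The only place you go beyond the paper's (very terse) proof is the well-definedness of the framing distribution $r_1+\cdots+r_n = sl(K)$, which the paper disposes of with the single word ``Equivalently'' in the definition of $\a'$; your sketch is the right verification, since pushing a framing generator through the braid via $(\mathrm{f}_2)$ gives $t_j\,\a = \a\, t_{w(j)}$ with $w$ the underlying permutation, so cyclicity of $\Jtr$ shows the trace depends only on the total framing of each component.
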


\begin{proof}
By construction, $M_d$ is  invariant under conjugation. We will show that $M_d$ is invariant under positive stabilization and destabilization, that is, $M_d(\widehat{\a}) = M_d(\widehat{\a\s_n})$ for any $\a \in B_n$, $n \in \N$. We have:
$$
M_d(\widehat{\a\s_n}) = \frac{1}{z^n} \Jtr(\a \s_n) = \frac{1}{z^n} \, z \,\Jtr(\widehat{\a}) = M_d(\widehat{\a}).
$$
\end{proof}

\begin{rem}\label{rem:re-scal} \rm 
Due to the absence of the negative stabilization in the definition of $M_d$, the links $\widehat{\a \s_n}$ and $\widehat{\a \s_n^{-1}}$ need not take the same value of the invariants. Hence, the re-scaling map $\s_i \mapsto \sqrt{\lambda_D} g_i$ is not needed any more and by consequence a quantity analogous to $\lambda_D$ is not introduced. However, if we make such a rescaling and specialize 
$(x_1, \ldots , x_{d-1})$ to the solution  
$X_D=({\rm x}_1, \ldots , {\rm x}_{d-1})$ of the {\rm E}--system, then   the corresponding invariant of transverse links would coincide with the invariants $\Phi_{d,D} (q,z)$ of oriented framed links from Theorem \ref{framedinv}.
 
\end{rem}

\begin{rem} \rm
The behaviour of the traces $\Jtr$ under inversion and split links carries through to the invariants $M_d$.
\end{rem}

\section{Enabling computations for the Yokonuma--Hecke algebras}\label{comp}
Due to the complicated quadratic relations of the Yokonuma--Hecke algebras, calculations of interesting examples are very difficult without the use of the computer. M.~Chmutov
created a program in Maple and K.~Karvounis created a program in $C\#$  for computing the trace $\Jtr$ and the invariants $\Phi_{d,D}$ (and consequently also the invariants $\Theta_{d,D}$).
The two programs were compared and they are available on:
\begin{center}
\url{http://www.math.ntua.gr/~sofia/yokonuma}
\end{center}

\subsection{Description of the algorithm}

The algorithm used in both programs is based on recursive applications of the quadratic relation~\eqref{quadr} of the Yokonuma--Hecke algebras. 
Assume that we want to calculate the trace $\Jtr$ of an element $\alpha \in \mathcal{F}_{n}$.
First, all negative exponents of the braiding generators are eliminated through the inverse relation~\eqref{invrs}. This produces a $\C$-linear combination of new framed braid words with only positive exponents. Applying successively the quadratic relation on these words leads to a $\C$-linear combination of words not containing any braiding generator with exponent other than $1$. Then, by successive applications of the framing relations $(\mathrm{f}_1)$ and $(\mathrm{f}_2)$, all these words are reduced to their split form. If more squares of the braiding generators appear in the process, we apply again the quadratic relation. We repeat the above procedure until $\alpha$ is expressed as a   $\C$-linear combination of framed braid words (equivalently, algebra monomials in $\YH(q)$) which are in their  split form and do not contain any braiding generator with exponent other than $1$.

Next, by applying the braid relations $(\mathrm{b}_1)$ and $(\mathrm{b}_2)$, especially $(\mathrm{b}_1)$ and its more general form 
\begin{equation*}
\s_{j} \s_{j-1} \ldots \s_{i+1} \s_{i} \s_{i+1} \ldots \s_{j-1} \s_{j} = \s_{i} \s_{i+1} \ldots  \s_{j-1} \s_{j}  \s_{j-1} \ldots \s_{i+1} \s_{i}
\qquad \mbox{\text{for} $i<j$,}
\end{equation*}
and breaking any squares that appear in the process with the use of the quadratic relation, 
$\alpha$ is eventually expressed as a $\C$-linear combination of algebra monomials in $\YH(q)$ of the form:
$$t_1^{k_1}\ldots t_n^{k_n} g_{i_1} \ldots g_{i_r},$$
where $k_1,\ldots,k_n \in \Z$, $i_p \neq i_{p+1}$ and $g_{{\rm max}\{i_1,\dots,i_r\}}$ appears only once.
Set $I:= {\rm max}\{i_1,\dots,i_r\}$.
For all $j=1,\ldots,n$, let $\overline{k}_j$ denote $k_j \,({\rm mod}\, d)$. Set 
$J:= {\rm max}\{ 1 \leq j \leq n \,|\, \overline{k}_j \neq 0\}$.
 Note that we have 
 $$\Jtr (t_1^{k_1}\ldots t_n^{k_n} g_{i_1} \ldots g_{i_r})=\Jtr (t_1^{k_1}\ldots t_J^{k_J} g_{i_1} \ldots g_{i_r}).$$
 Then one of the following three cases occurs:
\begin{itemize}
\item $J > I+1$. In this case, rule (4) of $\Jtr$ can be applied: 
\begin{center}
$ \Jtr (t_1^{k_1}\ldots t_J^{k_J} g_{i_1} \ldots g_{i_r})=
x_{\overline{k}_{J}}\, \Jtr (t_1^{k_1} \ldots t_{J -1}^{k_{J -1}} g_{i_1} \ldots g_{i_r}).$
\end{center} \smallbreak
\item $J < I+1$. In this case, rule (3) of $\Jtr$ can be applied: 
\begin{center}
$\Jtr (t_1^{k_1}\ldots t_{J}^{k_{J}} g_{i_1} \ldots  g_I \ldots g_{i_r}) = z \,  \Jtr (t_1^{k_1}\ldots t_{J}^{k_{J}} g_{i_1} \ldots  \widehat{g_I}  \ldots g_{i_r}),$
\end{center}
where $\widehat{g_I}$ denotes the removal of $g_I$. \smallbreak
\item $J = I+1$. Then $t_{J}=t_{I+1}$ commutes with every braiding generator except  for $g_{I}$. We have:
\begin{center}
$\begin{array}{rcl}
\Jtr (t_1^{k_1}\ldots t_{I+1}^{k_{I+1}} g_{i_1} \ldots  g_I \ldots g_{i_r})  &= & \Jtr (t_1^{k_1}\ldots t_{I}^{k_{I}} g_{i_1} \ldots  g_I t_{I}^{k_{I+1}}  \ldots g_{i_r}) \\
 & = & z \, \Jtr (t_1^{k_1}\ldots t_{I}^{k_{I}} g_{i_1} \ldots  \widehat{g_I} t_{I}^{k_{I+1}}  \ldots g_{i_r}),
 \end{array}$
\end{center}
where $\widehat{g_I}$ denotes the removal of $g_I$.
\end{itemize}
Iterations of the above steps allow us to compute $\Jtr (t_1^{k_1}\ldots t_n^{k_n} g_{i_1} \ldots g_{i_r})$ and, eventually, through the linearity of the trace, $\Jtr(\a)$. 

The above algorithm has exponential complexity with respect to $d$ due to the recursive application of the quadratic relation; each application of the quadratic relation produces a linear sum of 
$d+1$ new words.

\subsection{The case of classical links}
If we restrict to the subset of classical links, it has been conjectured by J. Juyumaya that the application of the trace $\Jtrs$ on a classical braid word will not result in any appearance of the algebra generators $t_j$, except through the elements $e_i$. Thus, the last rule of the trace $\Jtrs$ can be replace by two other rules involving the elements $e_i$. This result has been indeed proved in \cite{chjukala}. Consequently, a computer program similar to the above has been developed in \cite{ka} for computing the invariants $\Theta_{d,D}$ treating the $e_i$'s as formal elements; hence in every application of the quadratic relation we obtain only two new words, which reduces greatly the computational complexity (see \cite{ka} for details). This program is also available on \url{http://www.math.ntua.gr/~sofia/yokonuma}. 

\section{Results on transverse knots}\label{results_transverse}

\subsection{Computational results} Our original hope was that the isotopy invariants $M_d$, defined in $\S \ref{trnsv}$, would distinguish the transverse knots of the same topological type and with the same Bennequin numbers \cite{Be}. However, our computer experiments show that the invariants  $M_d$  do not distinguish the simplest examples (which are so complicated that the direct computation by hand is impossible) from the infinite series introduced by Birman--Menasco \cite{BM}:
$$
\s_1^{2a+1}\s_2^{2b}\s_1^{2c}\s_2^{-1}\qquad\mbox{and}\qquad
\s_1^{2a+1}\s_2^{-1}\s_1^{2c}\s_2^{2b}\qquad
\mbox{with $a,b,c >1$ and $a+1\not=b\not=c$.}
$$
Then, using the inversion property (Proposition \ref{invProp}), we  managed to give a theoretical confirmation of our observation not only for the simplest examples but also for the two infinite series of transversely non-simple knots, the one above and the one by Khandhawit--Ng \cite{KN}:
$$
\s_3\s_2^{-2}\s_3^{2a+2}\s_2\s_3^{-1}\s_1^{-1}\s_2^2\s_1^{2b+1}
\qquad\mbox{and}\qquad
  \s_3\s_2^{-2}\s_3^{2a+2}\s_2\s_3^{-1}\s_1^{2b+1}\s_2^2\s_1^{-1}
\qquad\mbox{for $a,b\geqslant 0$,}
$$
starting with the knot $10_{132}$ with $a=b=0$.

We also computed some invariants  $M_d$
for the following 13 knots \cite{CN}, three of which ($m(9_{45})$, $10_{128}$, $10_{160}$) were only conjectured to be transversely non-simple:
$$m(7_2), m(7_6), 9_{44}, m(9_{45}), 9_{48}, 10_{128}, m(10_{132}),
10_{136}, m(10_{140}), m(10_{145}), 10_{160}, m(10_{161}), 12n_{591},
$$
where $m(K)$ stands for the mirror image of $K$.  The three knots of particular interest for being transverse non-equivalent have the following braid presentations \cite{Ng}:
$$
\begin{array}{rlc}
m(9_{45}): & \qquad\s_3^{-1}\s_2\s_1\s_3\s_2^{-1}\s_3\s_1\s_2^2
\qquad\mbox{and}\qquad
  \s_2^2\s_1\s_3\s_2^{-1}\s_3\s_1\s_2\s_3^{-1}\ ,& sl=1\ ;\\
10_{128}: & \qquad \s_1\s_2\s_1\s_2\s_1\s_2\s_1\s_3^2\s_2\s_3^{-1}
\qquad\mbox{and}\qquad
  \s_3^{-1}\s_2\s_3^2\s_1\s_2\s_1\s_2\s_1\s_2\s_1\ ,&sl=6\ ;\\
10_{160}: &\qquad\s_2^{-1}\s_3\s_2^{-1}\s_1^{-1}\s_3\s_2\s_3\s_2\s_3\s_1^2
\qquad\mbox{and}\qquad
  \s_1^2\s_3\s_2\s_3\s_2\s_3\s_1^{-1}\s_2^{-1}\s_3\s_2^{-1}\ ,&sl=1\ .\\
\end{array}
$$
Our computations were disappointing, and  the  explanation for this lies in the following.

\subsection{Trace invariants and Vassiliev invariants.}

Any quantum knot invariant can be expressed in terms of Vassiliev invariants in a standard way (see, for example, \cite{bi} or \cite{CDM}).
We show that the trace invariant $M_d$ can be similarly expressed in terms of Vassiliev invariants.

Let $v$ be a transverse knot invariant taking values in an abelian group. We can define its extension to singular knots with finitely many double points recursively, setting $v(L_{\times}):=v(L_{+})-v(L_{-})$ for the triple of knots in Figure \ref{skeinsfig}. An invariant $v$ is called a {\it Vassiliev invariant of order $\leq n$} if its extension vanishes on all singular knots with more than $n$ double points.

\begin{prop}
Let us make a substitution $q=e^h$ into the invariant  
$M_d(q, z, x_1, \ldots , x_{d-1})$  and consider the Taylor expansion in the power series in $h$. For every $n \in \N$, the coefficient of $h^n$ is a Vassiliev invariant of order $\leq n$.
\end{prop}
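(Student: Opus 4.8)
The plan is to show that the standard argument relating quantum invariants to Vassiliev invariants applies to $M_d$, the key point being that $M_d$ satisfies a skein-type relation whose $L_+$ and $L_-$ terms differ by something divisible by a positive power of $h$ after the substitution $q=e^h$. First I would record the singular skein behaviour. Using relation \eqref{invrs}, namely $g_i^{-1}=g_i-(q-q^{-1})e_i$, and the fact that $M_d$ is built from the trace $\Jtr$ via $M_d(\widehat{\a})=z^{-(n-1)}\Jtr(\a')$, I would compute the difference $M_d(L_+)-M_d(L_-)$ for a Conway-type triple where $L_+$, $L_-$ differ by replacing a positive crossing $\s_i$ by its inverse $\s_i^{-1}$. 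Since $\a'$ carries the same framing exponents $t_1^{r_1}\cdots t_n^{r_n}$ in both cases (the self-linking correction), the braiding part yields
\begin{equation*}
M_d(L_+)-M_d(L_-)=\frac{q-q^{-1}}{z^{\,n-1}}\,\Jtr(\b'\,e_i),
\end{equation*}
where $\b'$ is the common framed braid with the crossing removed. This is the transverse/singular analogue of the skein relation already recorded for $\Psi_{d,D}$ in \eqref{gipi}, and $\Jtr(\b' e_i)$ is itself (a normalization of) the value of $M_d$ on the singular knot $L_\times$ obtained by replacing the crossing with the singular point $\tau_i\mapsto e_i$.

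Next I would exploit the factor $q-q^{-1}$. Under the substitution $q=e^h$ we have $q-q^{-1}=2\sinh h = 2h+O(h^3)$, so $q-q^{-1}$ is divisible by $h$ as a power series. Extending $M_d$ to singular knots by the Vassiliev rule $v(L_\times):=v(L_+)-v(L_-)$ of the Proposition's statement, the displayed computation shows that the extension of $M_d$ to a knot with one double point equals $(q-q^{-1})$ times a trace value, hence is divisible by $h$. Iterating: each time a double point is resolved via the skein relation, one extra factor of $(q-q^{-1})$, and therefore one extra factor of $h$, is produced. Consequently the extension of $M_d$ to any singular knot with $k$ double points is divisible by $h^k$ in the power series ring. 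I would make this precise by induction on the number of double points, using that the trace $\Jtr$ takes values in $\C[z,x_1,\ldots,x_{d-1}]$ (after the substitution, in a power series ring over this coefficient ring) so that divisibility by $h^k$ is a well-defined and stable statement.

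Finally I would assemble the conclusion in the standard way. Writing $M_d(q,z,x_1,\ldots,x_{d-1})=\sum_{n\geq 0} v_n\, h^n$ for the Taylor expansion, each coefficient $v_n$ is a transverse knot invariant (valued in $\C[z,x_1,\ldots,x_{d-1}]$). For a singular knot with $m>n$ double points, the extension of $M_d$ is divisible by $h^m$, so its $h^n$-coefficient vanishes; but this $h^n$-coefficient is exactly the Vassiliev extension of $v_n$ evaluated on that singular knot. Hence the extension of $v_n$ vanishes on all singular knots with more than $n$ double points, which is precisely the definition of a Vassiliev invariant of order $\leq n$. The main obstacle, and the step I would treat most carefully, is the first one: verifying that the framing prefactor $t_1^{r_1}\cdots t_n^{r_n}$ attached to transverse braids is genuinely common to $L_+$, $L_-$ and $L_\times$ (so that it factors out of the difference and the single clean factor $q-q^{-1}$ emerges), since transverse knots are framed and the self-linking numbers of $L_+$ and $L_-$ differ — one must check that the recursive Vassiliev relation $v(L_\times)=v(L_+)-v(L_-)$ is being applied to the correct framed representatives for the divisibility bookkeeping to be valid.
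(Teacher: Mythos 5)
Your proposal is essentially the paper's own proof: the paper likewise derives divisibility of $M_d(L_+)-M_d(L_-)$ by $q-q^{-1}=e^h-e^{-h}=2h+O(h^3)$ from Equation~\eqref{gipi}, argues that each additional double point contributes one more factor of $h$, and concludes that the coefficient of $h^n$ vanishes on singular knots with more than $n$ double points. The framing subtlety you flag at the end is genuine, but it is equally present and equally unaddressed in the paper's one-paragraph proof: since $sl(L_+)=sl(L_-)+2$, the framed representatives are $t_1^{r+1}\b\s_i$ and $t_1^{r-1}\b\s_i^{-1}$, so the difference of trace values contains, besides $(q-q^{-1})\,\Jtr\bigl(t_1^{r-1}\b e_i\bigr)$, the extra term $\Jtr\bigl(t_1^{r-1}(t_1^2-1)\b g_i\bigr)$, which is not visibly divisible by $q-q^{-1}$; the paper simply treats the framing prefactor as common to $L_+$ and $L_-$ (exactly as your displayed skein computation does), so on this point your write-up is, if anything, more scrupulous than the source.
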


\begin{proof}
According to Equation \ref{gipi}, the difference of values of 
$M_d(e^h, z, x_1, \ldots , x_{d-1})$ on two knots $L_{+}$ and $L_{-}$ is divisible by 
$q-q^{-1} = e^h-e^{-h}= 2h + (\mbox{higher powers of}\ h)$.
Thus, the value of the extension on a knot with a singular point is divisible by $h$. Now, if there is another double point far away from the first one, then the value of the extension will be divisible by another $h$, so it will be divisible by $h^2$. Similarly, each additional double point contributes an extra $h$ in the result. Finally, for a knot with $n+1$ double points, the value of the invariant will be divisible by $h^{n+1}$. Therefore, all coefficients up to order $n$ will vanish on such a knot.
\end{proof}

The above proposition implies that the invariant 
$M_d(e^h, z, x_1, \ldots , x_{d-1})$ of transverse knots is covered by an (infinite) sequence of Vassiliev invariants.
However, the Fuchs--Tabachnikov theorem \cite[Theorem 5.6]{FT} claims that any transverse Vassiliev invariant turns out to be topological Vassiliev invariant of framed knots.

Similarly, with appropriate rescaling as in Remark \ref{rem:re-scal}
and specializing $(x_1, \ldots , x_{d-1})$ to the solution  
$X_D=({\rm x}_1, \ldots , {\rm x}_{d-1})$ of the {\rm E}--system, we can express the invariant $\Psi_{d,D}$ of singular links from Theorem \ref{Jula4:Th5} as an infinite sequence of Vassiliev invariants. Thus, because of the Fuchs--Tabachnikov theorem, $\Psi_{d,D}$ does not distinguish singular transverse knots of the same topological type and with the same Bennequin numbers.

\section{Comparing the invariants $\Theta_{d,D}$ for classical knots and links\\ with the Homflypt polynomial}\label{conjclassic}
This section contains a conjecture about the invariants $\Theta_{d,D}$ on \textit{knots} and computational evidence supporting this conjecture.

\subsection{Computations and supporting data} 

We computed the Yokonuma--Hecke  invariants $\Theta_{d,D}$ for various solutions of the {\rm E}--system for about 200 pairs of non-equivalent alternating knots and non-alternating knots and links having the same Homflypt polynomial.

Among all knots with up to $12$ crossings, there are 283 pairs or groups of knots whose members share the same Homflypt polynomial or one member and the mirror image of another. Their list, compiled by using the program {\it LinKnot} \cite{jasa}, is available in Table~\ref{knot_table} in Appendix~\ref{tables}.

Among all links with up to $12$ crossings there are $130$ pairs or
groups of links whose members (or one member with the mirror image of the other) have the same Homflypt polynomial. Since there is no universally accepted notation for links with more than $11$ crossings, we give in Table~\ref{link_table} bekiw only the list of links with up to $11$ crossings. The
complete list of links with up to $12$ crossings can be compiled by using
the program {\it LinKnot} \cite{jasa}, given in Conway notation,
braid notation, or by their DT codes.
\begin{table}[H]
\centering
\begin{tabular}{|c|c|c|} \hline
$9_{50}^2,L10n42$ & $9_{34}^2,L11n186$ & $L11a13,L11a323$ \\ \hline
$L10n79,L10n95$ & $L11n320,L11n329$ & $L11a149,L11a195$ \\ \hline
$L11a184,L11a207$ & $L11a172,L11a377$ & $L11a173,L11a382$ \\ \hline
\end{tabular}
\caption{Pairs of links with the same Homflypt polynomial}
\label{link_table}
\end{table}

\noindent As we shall state next, the invariants $\Theta_{d,D}$ agree on all knot pairs or groups sharing the same Homflypt polynomial. From our computations it is not possible to draw a definite conclusion about the behaviour of the invariants $\Theta_{d,D}$ on all mentioned link pairs of Table~\ref{link_table}.

\subsection{The invariants $\Theta_{d,D}$ for classical knots and the Homflypt polynomial}
While trying to compare computationally the invariants $\Theta_{d,D}(q,z)$  with the Homflypt polynomial $P(q,z)$, we noticed that the values of the invariants on \textit{knots} (links with only one component) were connected in the following way:
\begin{thm} \label{conjHomflypt}
Given a solution $X_D$ of the {\rm E}--system, for any braid $\a \in B_{n}$ such that $\widehat{\a}$ is a knot, we have 
$$\Theta_{d,D}(q,z) (\widehat{\a}) = \Theta_{1,\{0\}}  (q,{z}/{E_D}) (\widehat{\a}) = P(q,{z}/{E_D}) (\widehat{\a}).$$
\end{thm}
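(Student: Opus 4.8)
The plan is to reduce the statement to a purely algebraic comparison between the specialized trace $\Jtrs$ and the Ocneanu trace $\tau={\rm tr}_{1,\{0\}}$, exploiting the fact that on a \emph{knot} the single component forces all the framings introduced during the trace computation to ``line up'' on one strand. Concretely, I would first recall from the algorithm in \S\ref{comp} that computing $\Jtrs(\d(\a))$ for a classical braid $\a$ reduces, via the quadratic relation \eqref{quadr} and the inverse relation \eqref{invrs}, to evaluating $\Jtrs$ on algebra monomials of the form $t_1^{k_1}\cdots t_n^{k_n} g_{i_1}\cdots g_{i_r}$. For the Ocneanu trace the same reduction holds with $e_i=1$ and no framing generators. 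The key idea is that each application of the quadratic relation in $\YH(q)$ differs from the corresponding step in ${\rm H}_n(q)$ precisely by the insertion of an idempotent $e_i$, and by the E-condition we have $\Jtrs(e_i)=E_D=1/|D|$, which factors out cleanly (this is exactly what the E-system guarantees, cf.\ \eqref{Esystem} and the discussion following it). The claim is that, for a braid whose closure is connected, the total number of ``extra'' factors of $E_D$ that appear relative to the Homflypt computation is a fixed power determined by the Euler-characteristic/Seifert-circle count, and that this power is exactly absorbed by the substitution $z\mapsto z/E_D$.

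The main technical step I would carry out is to track, along the reduction algorithm, a bookkeeping of how many independent factors of $E_D$ are produced. Every time rule (3) is applied (removal of a top generator $g_I$) it contributes a factor $z$ in $\Jtrs$ versus a factor $z$ in $\tau$, so these match; the discrepancy comes only from the idempotents. I would argue by induction on the number of strands $n$, using the inductive basis of $\YH(q)$ described after Theorem~\ref{trace}, that for a \emph{connected} closure the net effect is a uniform rescaling. The cleanest way to make this precise is to use the three-case analysis at the end of \S\ref{comp} ($J>I+1$, $J<I+1$, $J=I+1$) and show that in each case the framing generators $t_j^{k_j}$ accumulated on a single strand evaluate under $\Jtrs$ to the same power of $E_D$, because on a knot the permutation underlying $\a$ is an $n$-cycle and the framings redistribute around the one component.

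The hard part, and the crux of the argument, will be proving that the power of $E_D$ is genuinely independent of the chosen braid word representing the knot and equals exactly $n-1$ (so that it cancels against the $\Lambda_D^{n-1}=(z\sqrt{\lambda_D})^{-(n-1)}$ prefactor after the change $z\mapsto z/E_D$), rather than depending on the writhe or the particular sequence of reductions. This is where the \emph{connectedness} hypothesis is essential: for links with several components the framings split among components and the single scaling $z\mapsto z/E_D$ no longer suffices, which is precisely why Theorem~\ref{conjHomflypt} fails for links (as announced in \S\ref{links}). I would therefore expect the cleanest route to avoid a raw induction on words and instead establish an intermediate identity of the form $\Jtrs(\d(\a)) = E_D^{\,n-1}\,\tau(\delta(\a))$ evaluated at the rescaled Markov parameter, proved by comparing the two traces on the inductive basis and invoking the E-condition $\Jtrs(\a e_n)=E_D\,\Jtrs(\a)$ at each stabilization; the substitution $z\mapsto z/E_D=d\,z$ (since $E_D=1/|D|$, and the stated form uses $d\,z$ in the singleton normalization) then yields the three-way equality, with the last equality being the definition of $P$ via $\Theta_{1,\{0\}}$ given in \S\ref{yhinvts}.
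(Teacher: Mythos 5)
You should know at the outset that this paper contains no proof of Theorem~\ref{conjHomflypt}: it was formulated as a conjecture from the computational data of \S\ref{conjclassic} (about 200 Homflypt-equivalent pairs, plus the families of Table~\ref{families}) and is proved only in the sequel \cite{chjukala}, so your proposal must stand on its own rather than be matched against an argument in the text. Its skeleton is correct: since $\lambda_{\rm H}$ evaluated at $z/E_D$ equals $\lambda_D$, the theorem is equivalent to the trace identity $\Jtrs(\d(\a))(q,z) = E_D^{\,n-1}\,\tau(\d(\a))(q,z/E_D)$ for braids with connected closure, which is exactly the necessary condition the paper itself records in \S\ref{links}, and your accounting of how $E_D^{\,n-1}$ is absorbed by the prefactor $\Lambda_D^{n-1}$ under $z \mapsto z/E_D$ is right.

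The genuine gap is in the mechanism you propose for proving that identity. Your premise that each application of the quadratic relation \eqref{quadr} differs from the Iwahori--Hecke step ``precisely by an inserted $e_i$ which factors out cleanly as $E_D$'' is false: the E-condition gives $\Jtrs(\a e_n) = E_D\, \Jtrs(\a)$ only for the top idempotent against $\a \in \YH(q)$, while idempotents entangled with braiding generators contribute no factor at all --- since $t_n^s t_{n+1}^{-s} g_n = t_n^s g_n t_n^{-s}$, conjugation invariance gives $\Jtrs(e_n g_n) = z$, not $E_D\,z$, so the term $(q-q^{-1})e_i g_i$ produced by \eqref{quadr} traces exactly as its Hecke counterpart whereas lone-$e_i$ terms pick up $E_D$. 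The paper's own computation in \S\ref{links}, $\Jtrs(\s_1^{2k}) = 1 - E_D + E_D\,\tau(\s_1^{2k})$ with $\tau$ at $z/E_D$, exhibits an additive defect $1-E_D$ that no multiplicative bookkeeping of $E_D$-factors along a reduction can produce; whether a given $e_i$ yields $E_D$, is absorbed into a $z$, or generates mixed terms depends on the global cycle structure of the underlying permutation, which is precisely why the correct exponent of $E_D$ counts components and why connectedness matters. Your induction supplies no word-independent invariant controlling this count: the step ``the power of $E_D$ is exactly $n-1$, independent of the braid word'' \emph{is} the theorem, and you explicitly defer it. Closing it is the content of \cite{chjukala}, where one first proves Juyumaya's conjecture (mentioned in \S\ref{comp}) that $\Jtrs$ on classical braids is computable by closed rules treating the $e_i$ as formal elements, and only then relates the $E_D$-count to the number of components. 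A minor slip to fix as well: $z/E_D = |D|\,z$, which equals $d\,z$ only when $D = \Z/d\Z$; for $D$ a singleton one has $E_D = 1$ and the substitution is trivial, contrary to your parenthetical.
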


We have confirmed Theorem~\ref{conjHomflypt} on several knots from the table of knots as well as on knots from specific families from Table~\ref{families} below. Theorem~\ref{conjHomflypt} is proved in the sequel paper \cite{chjukala}.

\begin{rem} \rm
The transformation $z  \mapsto {z}/{E_D}$ corresponds to the transformation $\lambda_{\rm H} \mapsto \lambda_D$ for the Homflypt polynomial at variables $(q,\lambda_{\rm H})$. Hence, we have equivalently that $\Theta_{d,D}(q,\lambda_D) = P(q,\lambda_D)$, where the variable $\lambda_{\rm H}$ of the Homflypt polynomial has been substituted with the variable $\lambda_D$. Consequently, due to Theorem~\ref{conjHomflypt} and the above equality, $E_D$ does not appear in the values of the invariants $\Theta_{d,D}$ at variables $(q,\lambda_D)$ when computed on knots.
\end{rem}

\subsection{The invariants $\Theta_{d,D}$ for classical links}\label{links}
It does not seem, however, that a similar result holds for links. For Theorem~\ref{conjHomflypt} to hold for links, it must be that $\Jtrs(\a) = E_D^{n-1} \tau(\a)$, for some braid $\a$. Although, for example, for the link $\widehat{\s_1^{2k}}$ we have:
$$\Jtrs(\s_1^{2k}) = 1 -E_D + \frac{q^{2k}-q^{-2k}}{q+q^{-1}} z + \left( \frac{q^{2k-1}+q^{-(2k-1)}}{q+q^{-1}}  \right) E_D,$$ \smallbreak
$$\tau(\s_i^{2k}) = \frac{q^{2k}-q^{-2k}}{q+q^{-1}} \frac{z}{E_D} + \left( \frac{q^{2k-1}+q^{-(2k-1)}}{q+q^{-1}} \right) ,$$
where we consider the Ocneanu trace $\tau$ at variable $z/E_D$. Hence:
$$
\Jtrs(\s_1^{2k}) = 1 - E_D + E_D \, \tau(\s_i^{2k}),
$$
and by consequence:
$$
\Theta_{d,D}(q,z)(\s_1^{2k}) \neq P(q,z/E_D)(\s_1^{2k}).
$$

Further, we also computed the invariants $\Theta_{d,D}$ on families of links from Table~\ref{families} below and we arrived at the same conclusion. Namely, there was no visible pattern relating the invariants $\Theta_{d,D}$ with the Homflypt polynomial.
\begin{table}[H]
\renewcommand{\arraystretch}{1.2}
\begin{tabular}{|l|c|c|}
\hline 
Braids & Knots & Links \\ 
\hline 
$\s_1^p$ & $p \equiv 1 \mod 2$ & $p \equiv 0 \mod 2 $\\ 
$\s_1^p \s_2^{-1} \s_1^{-1} \s_2^{-1}$ & $p \equiv 1 \mod 2$ & $p \equiv 0 \mod 2$\\
$\s_1^p \s_2^{-1} \s_1 \s_2^{-1}$ & $p \equiv 1 \mod 2$ & $p \equiv 0 \mod 2$\\
$(\s_1 \s_2^{-1})^{3p}$ & -- & $\forall p$\\
$(\s_1^3 \s_2^{-1}) (\s_1 \s_2^{-1})^{3p-1}$ & -- & $\forall p$\\
$\s_1^{2p} \s_2 \s_1^{-1} \s_2$ & -- & $\forall p$\\
$\s_1^{2p} \s_2 \s_1^2 \s_2$ & -- & $\forall p$\\
$(\s_1^3 \s_2^{-1})^{2} \s_1 \s_2^{-1}$ & -- & (one link)\\
\hline
\end{tabular}
\caption{Families of knots and links}
\label{families}
\end{table}

In the sequel paper \cite{chjukala}, we provide a concrete formula for relating the invariants $\Theta_{d,D}$ on links with the Homflypt polynomial. Further, we are able to show that the invariants $\Theta_{d,D}$ distinguish six pairs of Homflypt-equivalent links (one of them is the fourth in Table~\ref{link_table}) and we give a diagrammatic proof for one pair.

\newpage
\appendix
\section{Table of Homflypt equivalent knots}\label{tables}
\tiny
\begin{table}[H]
\centering
\begin{tabular}{|c|c|c|c|} \hline
$10_{132},5_1$ & $6_2,K12n25$ & $K11n157,K12n482$ & $7_3,K12n523$\\ \hline
$7_1,K12n749$ & $K11n132,K11n50,K12n414$ & $K11n127,K11n22$ & $K12n10,K12n7$\\ \hline
$K11n128,K12n142$ & $10_{156},8_{16}$ & $K11a138,K11a285$ & $10_{33},K12n278$\\ \hline
$K12n367,K12n745$ & $K12n145,K12n838$ & $K12n400,K12n450$ &  $10_{105},K12n17,K12n584$\\ \hline
$K11n138,K11n79$ & $10_{23},K12n607$ & $10_{129},8_8$ & $K11n20,K12n322$\\ \hline
$10_{95},K12n595$ & $K11n17,K12n146$ & $9_8,K12n65$ & $K12n335,K12n710$\\ \hline
$K11a104,K11a168$ & $K12n421,K12n422$ & $K12n180,K12n633$ & $K12n429,K12n520$\\ \hline
$K11n63,K12n564$ & $10_{39},K12n215$ & $10_{93},K12n277$ & $K11n35,K11n43$\\ \hline
$K12n222,K12n58$ & $K11n71,K11n75$ & $K12n130,K12n85$ & $10_{155},K11n37$\\ \hline
$K12n123,K12n128$ & $8_2,K12n340$ & $K11n10,K11n144$ & $K12n126,K12n99$\\ \hline
$K11n40,K11n46$ & $K11n11,K11n112$ & $K11n56,K11n58,K12n449$ & $K11n51,K12n218,K12n54$\\ \hline
$10_{141},K12n438$ & $K11a282,K11a81$ & $K11n172,K12n312,K12n397$ & $10_{116},K12n424$\\ \hline
$K12n553,K12n556$ & $K11n55,K12n315$ & $K12n20,K12n634$ & $K11n21,K11n4,K12n24$\\ \hline
$K12n14,K12n298$ & $K11n26,K12n97$ & $10_{18},K12n561$ & $K12n420,K12n636$\\ \hline
$10_{88},K12n586$ & $K12n300,K12n463$ & $10_{38},K12n343$ & $10_{28},K12n342$\\ \hline
$K12n201,K12n551$ & $K12a154,K12a162$ & $K12a397,K12a83$ & $10_{83},K12n317,K12n511,K12n588$\\ \hline
$K12n606,K12n685$ & $K12a401,K12a81$ & $K12n209,K12n213$ & $K11a185,K11a265$\\ \hline
$K12n320,K12n534$ & $10_{92},K12n515$ & $K12n133,K12n88$ & $10_{27},K12n471,K12n645$\\ \hline
$K12n197,K12n687$ & $K11a183,K11a41$ & $10_{117},K12n415$ & $K12n540,K12n727$\\ \hline
$K12n560,K12n880$ & $K11a11,K11a167$ & $K12n486,K12n718$ & $K12n489,K12n746$\\ \hline
$K11a192,K11a299$ & $10_{84},K12n330$ & $K12a102,K12a107$ & $K12a707,K12a935$\\ \hline
$K12a108,K12a120$ & $K12a134,K12a188$ & $K12n417,K12n694$ & $K12n136,K12n91$\\ \hline
$K12a330,K12a619$ & $10_{35},K12n48$ & $K12a403,K12a625$ & $K12a140,K12a585$\\ \hline
$K11a272,K11a30$ & $K12a180,K12a560$ & $K12a338,K12a783$ & $K12a307,K12a781$\\ \hline
$K12a28,K12a799$ & $K12n223,K12n55$ & $K12n21,K12n29$ & $K12n224,K12n62,K12n66$\\ \hline
$K12n252,K12n262$ & $K12n255,K12n263$ & $K12n161,K12n36$ & $K12n144,K12n507$\\ \hline
$K12n288,K12n501$ & $K12n131,K12n86$ & $K12n26,K12n32$ & $K11a248,K11a71$\\ \hline
$K12n348,K12n70$ & $K12n3,K12n69$ & $K12n434,K12n616$ & $K12a514,K12a849$\\ \hline
$K12a513,K12a989$ & $K12a232,K12a322$ & $K12a275,K12a624$ & $K12a458,K12a887$\\ \hline
$K12a310,K12a388$ & $K12a705,K12a893$ & $K12a765,K12a961$ & $K12a510,K12a821$\\ \hline
$K12a1086,K12a945$ & $K11n73,K11n74$ & $K12n225,K12n63$ & $K11n151,K11n152$\\ \hline
$K12a1023,K12a926$ & $K12n132,K12n87$ & $K11n39,K11n45,K12n257$ & $K12a1071,K12a1175,K12a734$ \\ \hline
$K12n155,K12n677$ & $K11n34,K11n42$ & $K12n221,K12n56,K12n57$ & $K12n256,K12n264$\\ \hline
$K12n208,K12n212$ & $10_{52},K12n241$ & $K12n231,K12n232$ & $K12n210,K12n214$\\ \hline
$K12n364,K12n365$ & $10_{54},K12n154$ & $10_{57},K12n678$ & $10_{22},K12n817$\\ \hline
$K12n319,K12n470$ & $K12n194,K12n734$ & $K12a1173,K12a546$ & $K12n378,K12n819$\\ \hline
$10_{103},10_{40},K12n412$ & $K12n28,K12n34$ & $K12n380,K12n440$ & $K12n124,K12n129$\\ \hline
$K12n512,K12n80$ & $K12n23,K12n31$ & $10_{111},K12n112$ & $K12n159,K12n469$\\ \hline
$K12n565,K12n737$ & $K11a1,K11a149$ & $10_{25},10_{56}$ & $10_{86},K12n541$\\ \hline
$K12n532,K12n760$ & $10_{102},K12n447$ & $K12n147,K12n413,K12n597$ & $K12n339,K12n741$\\ \hline
$K12a1270,K12a588$ & $K12n117,K12n796$ & $K12a114,K12a117$ & $K12a44,K12a64$\\ \hline
$K11a116,K11a2$ & $10_{72},K12n94$ & $K12a385,K12a751$ & $K12a1172,K12a1268$\\ \hline
$K12n604,K12n666$ & $K11a186,K11a241$ & $K12a327,K12a677$ & $K12n344,K12n684$\\ \hline
$K12a111,K12a91$ & $K12n125,K12n98$ & $K12a778,K12a992$ & $K12a273,K12a890$\\ \hline
$K11a196,K11a216$ & $10_{106},K12n369$ & $K12a215,K12a280$ & $K12a352,K12a59,K12a63$\\ \hline
$K11a252,K11a254$ & $K12a13,K12a15$ & $K11a160,K11a289,K11a76$ & $10_{100},K12n188$\\ \hline
$K11a251,K11a253$ & $K12a164,K12a166$ & $K12n113,K12n466$ & $K11n41,K11n47,K12n326$\\ \hline
$K12a284,K12a673$ & $K12a226,K12a638$ & $K12a325,K12a711$ & $K12n618,K12n776$\\ \hline
$K12a328,K12a630$ & $K12a1076,K12a209$ & $K12a184,K12a333,K12a662$ & $K12a48,K12a60$\\ \hline
$K12a606,K12a715$ & $K12a658,K12a729$ & $K12a1032,K12a761$ & $K12n693,K12n696$\\ \hline
$K11a255,K11a79$ & $10_94,K12n787$ & $K12a14,K12a417,K12a7$ & $K11n36,K11n44$\\ \hline
$K11a33,K11a82$ & $K12a323,K12a452$ & $K11a24,K11a26,K11a315$ & $K12a157,K12a30,K12a33$\\ \hline
$K12a416,K12a71$ & $K12a1096,K12a402$ & $K12n135,K12n416,K12n90$ & $K12n138,K12n93$\\ \hline
$K11a19,K11a25$ & $10_{109},K12n695$ & $K11a316,K11a35$ & $K12a101,K12a115$\\ \hline
$K11a231,K11a57$ & $K12a136,K12a67$ & $K12n173,K12n27,K12n33$ & $K12n134,K12n89$\\ \hline
$K12n207,K12n228$ & $K12n670,K12n681$ & $K12n229,K12n67$ & $K12n137,K12n92$\\ \hline
$K12a113,K12a29$ & $K11n76,K11n78$ & $K12n671,K12n682$ & $K12n220,K12n59$\\ \hline
$K11a44,K11a47$ & $K12n691,K12n692$ & $K12n244,K12n338$ & $K12a811,K12a817$\\ \hline
$K12a240,K12a671$ & $K12a701,K12a987$ & $K12a527,K12a958$ & $K12a1083,K12a646$\\ \hline
$K12a599,K12a956$ & $K12a587,K12a977$ & $K12a343,K12a5$ & $K12a124,K12a384$\\ \hline
$K12a109,K12a437$ & $K12a1094,K12a306$ & $K12a212,K12a882$ & $K12a210,K12a623$\\ \hline
$K12a727,K12a947$ & $K12a1087,K12a456$ & $K12a1104,K12a478$ & $K12a844,K12a846$\\ \hline
$K12n22,K12n30$ & $K12n122,K12n127$ & $K12a112,K12a305$ & $K12a131,K12a133,K12a324,K12a933$\\ \hline
$K12a173,K12a258$ & $K12a24,K12a299$ & $K12a427,K12a435,K12a990$ & $K12a126,K12a132,K12a341,K12a627$\\ \hline
$K12a1013,K12a1176$ & $K12a1226,K12a916$ & $K12a1185,K12a656,K12a908$ & $K12a1047,K12a1203,K12a1246$\\ \hline
$K12n206,K12n227$ & $K12n205,K12n226$ & $K12n261,K12n64$ & $K12a45,K12a65$\\ \hline
$K12a195,K12a693$ & $K12a167,K12a692$ & $K12n219,K12n60,K12n61$ & $K12a116,K12a122,K12a182$\\ \hline
$K12a36,K12a694$ & $K12a639,K12a680$ & $K12a1136,K12a1224$ & $K12a675,K12a688$\\ \hline
$K12a1116,K12a41$ & $K12a1174,K12a837$ & $K12a830,K12a831$ & $K12a829,K12a832$\\ \hline
$K12a511,K12a988$ & $K12a274,K12a533$ & $K12a390,K12a672$ & $K12a526,K12a598,K12a954$\\ \hline
$K12a746,K12a966$ & $K12a1201,K12a523$ & $K12a1052,K12a1182$ &\\ \hline
\end{tabular}
\caption{Groups of knots up to 12 crossings with the same Homflypt polynomial}
\label{knot_table}
\end{table}
\normalsize


\begin{thebibliography}{ABCD}
\bibitem[Ba]{ba} J.~C.~Baez, {\em Link invariants of finite type and perturbation theory},
  Lett. Math. Phys. {\bf 26} (1992),  no. 1, 43--51.

\bibitem[Be]{Be} D.~Bennequin, {\em Entrelacements et \'{e}quations de Pfaffe}, Asterisque, {\bf 107-108} (1983), 87--161.

\bibitem[Bi]{bi} J.~S.~Birman, {\em New points of view in knot theory},  Bull. Amer. Math. Soc. (N.S.) {\bf 28}  (1993),  no. 2, 253--287.

\bibitem[BiMe]{BM} J.~Birman, W.~Menasco, {\em Stabilization in the braid
   groups. II. Transversal simplicity of knots},
   Geom. Topol., {\bf 10} (2006) 1425--1452.
   Preprint \verb#arXiv:math/0310280v4 [math.GT]#.
   
\bibitem[ChlJuKaLa]{chjukala} M.~Chlouveraki, J.~Juyumaya, K. Karvounis, S.~Lambropoulou, {\em Identifying the invariants for classical knots and links from the Yokonuma-Hecke algebra}, see arXiv \verb#arXiv:1505.06666 [math.GT]#.

\bibitem[ChLa]{chla} M.~Chlouveraki, S.~Lambropoulou, {\em The Yokonuma-Hecke algebras and the HOMFLYPT polynomial}, JKTR {\bf 22} No. 14 (2013) 1350080 (35 pages).

\bibitem[ChPo]{chpa} M.~Chlouveraki, L.~Poulain d'Andecy, {\it 
Representation theory of the Yokonuma--Hecke algebra}, Advances in Mathematics {\bf  259} (2014), 134–172.

\bibitem[CDM]{CDM} S.~Chmutov, S.~Duzhin, Y.~Mostovoy, {\it Introduction to Vassiliev Knot Invariants}. Cambridge University Press (2012).

\bibitem[ChNg]{CN} W.~Chongchitmate, L.~Ng, {\em An atlas of Legendrian
   knots}, Preprint \verb#arXiv:1010.3997v1 [math.SG]#.

\bibitem[FuTa]{FT} D.~Fuchs, S.~Tabachnikov, {\em Invariants of Legendrian and transverse knots in the standard contact space}, Topology {\bf 36} (1997), no. 5,  1025--1053.

\bibitem[Ge]{ge} B.~Gemein, {\em Singular braids and Markov's theorem}, J. Knot Theory Ramifications {\bf 6} (1997), no. 4, 441--454.

\bibitem [JaSa]{jasa} S.~V.~Jablan, R.~Sazdanovi\' c,
LinKnot -- Knot Theory by Computer. World Scientific, New Jersey,
London, Singapore, 2007,  {\tt http://math.ict.edu.rs/}.

\bibitem[Jo]{jo} V.~F.~R. Jones, {\em Hecke algebra representations of braid groups and link polynomials}, Annals of Math. {\bf 126} (1987),  no. 2, 335--388.

\bibitem[Ju]{ju} J.~Juyumaya, {\em Markov trace on the Yokonuma-Hecke algebra},
   J. Knot Theory and its Ramifications {\bf 13} (2004) 25--39.

\bibitem[JuLa1]{jula1} J.~Juyumaya, S.~Lambropoulou, {\em $p$--adic framed braids},
  Topology and its Applications {\bf 154} (2013) 149--191.

\bibitem[JuLa2]{jula2} J.~Juyumaya, S.~Lambropoulou, {\em $p$--adic  framed
   braids II}, Advances in Mathematics {\bf 234} (2013) 149--191.

\bibitem[JuLa3]{jula3} J.~Juyumaya, S.~Lambropoulou, {\em An adelic
   extension of the Jones polynomial},
   M. Banagl, D. Vogel (eds.) The mathematics of knots, Contributions
   in the Mathematical and Computational Sciences, Vol.~1, Springer.
   Preprint \verb#arXiv:0909.2545v2 [math.GT]#.

\bibitem[JuLa4]{jula4} J.~Juyumaya, S.~Lambropoulou, {\em An invariant for
   singular knots},
   J. Knot Theory and its Ramifications, {\bf 18}(6) (2009) 825--840.
   
\bibitem[JuLa5]{jula5} J.~Juyumaya, S.~Lambropoulou, {\em On the framization of knot algebras}, in New Ideas in Low-Dimensional Topology, L.H. Kaufffman, V. Manturov Eds; Ser. Knots Everything, World Scientific Press, 2014.

\bibitem[Ka]{ka} K.~Karvounis, {\em Enabling computations for link invariants coming from the Yokonuma-Hecke algebras}, to appear in J. Knot Theory and its Ramifications, special issue dedicated to the memory of Slavik Jablan.

\bibitem[KhNg]{KN} T.~Khandhawit, L.~Ng, {\em A family of transversely
   nonsimple knots}, Algebr. Geom. Topol. {\bf 10}(1) (2010) 293--314.
   Preprint \verb#  arXiv:0806.1887v2 [math.GT]#.

\bibitem[KoSm]{KS} K.~H.~Ko, L.~Smolinsky, The framed braid group and $3$--manifolds,
{\it Proceedings of the AMS} {\bf 115}, No. 2 (1992), 541--551.

\bibitem[Ng]{Ng} L.~Ng, {\em Combinatorial knot contact homology and
   transverse knots},
   Preprint \verb#arXiv:1010.0451v1 [math.SG]#.

\bibitem[OrSh]{OrSh} S.~Yu.~Orevkov, V.~V.~Shevchishin, {\em Markov theorem   for transversal links},
   J. Knot Theory Ramifications {\bf 12}(7) (2003) 905--913.
   Preprint \verb#arXiv:math/0112207v2 [math.GT]#.

\bibitem[Sm]{sm} L.~Smolin, {\em Knot theory, loop space and the diffeomorphism group}, New perspectives in canonical gravity,  245--266, Monogr. Textbooks Phys. Sci. Lecture Notes, {\bf 5}, Bibliopolis, Naples, 1988.

\bibitem[Th]{thi} N.~Thiem, {\em Unipotent Hecke algebras of \,${\rm GL}_n(\mathbb{F}_q)$}, J.~Algebra {\bf 284} (2005) 559--577. 

\bibitem[Wr]{Wr} N.~Wrinkle, {\em The Markov Theorem for Transverse
   Knots},
   Preprint \verb#arXiv:math/0202055v1 [math.GT]#.

\bibitem[Yo]{yo} T.~Yokonuma, {\em Sur la structure des anneaux de Hecke
   d'un groupe de Chevalley fini},
   C.R. Acad. Sc. Paris, {\bf 264},  344--347 (1967).


\end{thebibliography}
\end{document}